\documentclass[amssymb,amsfonts,reqno]{amsart}

 \usepackage{xcolor}
 \usepackage[citebordercolor={green}]{hyperref}
									
\usepackage{color}
\usepackage{latexsym}
\usepackage{amsmath}
\usepackage{amssymb}
\usepackage{amsfonts}
\usepackage{esint}
\usepackage{graphicx}
\usepackage{accents}
\newcommand{\Lie}{{\mathcal{L}}}

\newcommand{\der}{\nabla}

\newcommand{\les}{\lesssim}
\newcommand{\bea}{\begin{eqnarray}}
\newcommand{\eea}{\end{eqnarray}}

\newcommand{\derm}{ { \der^{(\bf{m})}} }
\newcommand{\rderm}{   {\mbox{$\nabla \mkern-13mu /$\,}^{(\bf{m})} }   }

\newcommand{\eps}{{\varepsilon}}\newcommand{\R}{{\mathbb R}}

\newcommand{\E}{{\cal E}}

\newcommand{\la}{\langle}\newcommand{\si}{\sigma}\renewcommand{\b}{\beta}

\newcommand{\cal}{\mathcal}

\def\a{\alpha}\def\ga{\gamma}\def\de{\delta}
\def\bm{\left( \begin{array}{cc}}
\def\endm{\end{array}\right)}\newcommand{\eq}{\end{equation}}
\def\a{\alpha}\def\b{\beta}
\def\ga{\gamma}\def\de{\delta}\def\pa{\partial}

\def \rectangle#1#2{\hbox{\vrule\vbox to #2 {\hrule\hbox to #1{\hfil}\vfil\hrule}\vrule}}
\def\Lb{\underline{L}}
\def\a{\alpha}\def\b{\beta}\def\ga{\gamma}
\def\de{\delta}\def\pa{\partial}
\def\Lb{\underline{L}}

\def\pa{\partial}

\def\beaa{\begin{eqnarray*}}
\def\eeaa{\end{eqnarray*}}
\def\pa{\partial}
\def\a{{\alpha}}
\def\b{{\beta}}
\def\ga{\gamma}

\def\de{\delta}

\def\eps{\epsilon}
\def\la{\lambda}

\def\si{\sigma}

\def\Om{\Omega}

\def\Lb{{\underline{L}}}

\def\g{{\bf g}}

\def\SSS{{\Bbb S}}

\def\R{{\mathbb R}}

\def \p{ \partial}

\def\12{\frac{1}{2}}

\def\bep{\begin{proposition}}
\def\eep{\end{proposition}}

\def\4{\frac{1}{4}}

\def \p{ \partial}

\def\12{\frac{1}{2}}

\def\bep{\begin{proposition}}
\def\eep{\end{proposition}}

\def\bm#1{\boldsymbol{#1}}

\def\build#1_#2^#3{\mathrel{\mathop{\kern 0pt#1}\limits_{#2}^{#3}}}
\def\4{\frac{1}{4}}

\def\<{\langle}
\def\>{\rangle}

\parindent = 0 pt
\parskip = 12 pt

\theoremstyle{plain}
\newtheorem{theorem}{Theorem}
\newtheorem{proposition}{Proposition}
\newtheorem{lemma}{Lemma}
\newtheorem{corollary}{Corollary}

\theoremstyle{remark}

\theoremstyle{definition}
\newtheorem{definition}{Definition}
\numberwithin{equation}{section}
\numberwithin{proposition}{section}
\numberwithin{definition}{section}
\numberwithin{lemma}{section}
\numberwithin{corollary}{section}
\numberwithin{remark}{section}
\begin{document}
\include{psfig}
\title[Einstein-Yang-Mills in the Lorenz gauge]{Overview of the proof of the exterior stability of the $\bf{(1+3)}$-Minkowski space-time governed by the Einstein-Yang-Mills system in the Lorenz gauge}
\author{Sari Ghanem}
\address{Beijing Institute of Mathematical Sciences and Applications (BIMSA)}
\email{sarighanem@bimsa.cn}
\maketitle

\begin{abstract}
We study the Einstein-Yang-Mills system in both the Lorenz and harmonic gauges, where the Yang-Mills fields are valued in any arbitrary Lie algebra $\cal G$\,, associated to any compact Lie group $G$\,. This gives a system of hyperbolic partial partial differential that does not satisfy the null condition and that has new complications that are not present for the Einstein vacuum equations nor for the Einstein-Maxwell system. We prove the exterior stability of the Minkowski space-time, $\mathbb{R}^{1+3}$\,, governed by the fully coupled Einstein-Yang-Mills system in the Lorenz gauge, valued in any arbitrary Lie algebra $\cal G$, without any assumption of spherical symmetry. We start with an arbitrary sufficiently small initial data, defined in a suitable energy norm for the perturbations of the Yang-Mills potential and of the Minkowski space-time, and we show the well-posedness of the Cauchy development in the exterior, and we prove that this leads to solutions converging in the Lorenz gauge and in wave coordinates to the zero Yang-Mills fields and to the Minkowski space-time. This provides a first detailed proof of the exterior stability of Minkowski governed by the fully non-linear Einstein-Yang-Mills equations in the Lorenz gauge, by using a null frame decomposition that was first used by H. Lindblad and I. Rodnianski for the case of the Einstein vacuum equations. We note that in contrast to the much simpler case of the Einstein-Maxwell equations where one can omit the potential, in fact in the non-abelian case of the Einstein-Yang-Mills equations, the question of stability, or non-stability, is a purely gauge dependent statement and the partial differential equations depend on the gauge on the Yang-Mills potential that is needed to write up the equations.
\end{abstract}

\setcounter{page}{1}
\pagenumbering{arabic}

\section{Introduction}\label{BrielfSummary}

We are interested to study the structure of the partial differential equations for the fully coupled Einstein-Yang-Mills equations in the Lorenz gauge on the Yang-Mills potential. Indeed, in the Lorenz gauge, one gets a system of non-linear hyperbolic partial differential equations that does not satisfy the null condition, and that has a structure that is different than the Einstein vacuum equations in wave coordinates and different than the Einstein-Maxwell equations in the Lorenz gauge, and one faces serious obstacles for proving dispersive estimates for the Yang-Mills fields.

In fact, unlike the case of the Maxwell equations, in the case of the Yang-Mills fields, gauge transformations change the nature of the partial differential equations on the Yang-Mills potential $A$\,, that is needed to write up the Yang-Mills equations. Whereas in the abelian case of the Maxwell equations, one can omit the potential $ A$ in writing the equations and one can consider only the curvature $ F $ valued in the Lie algebra, yet one cannot do that for the Yang-Mills fields. 

Therefore, in the much more complicated case of the Einstein-Yang-Mills fields, the question of stability, or non-stability, is a purely gauge dependent statement and the partial differential equations depend on the potential $ A $, in contrast to the Einstein-Maxwell equations where one can study the Lie algebra curvature directly. In the Lorenz gauge, the Einstein-Yang-Mills equations do \textit{not} have a null structure and present serious complications that do not exist at all neither for the Einstein vacuum equations nor for the Einstein-Maxwell equations. Our aim in overcoming these complications is to advance on the study of structures for non-linear hyperbolic partial differential equations.

We prove the exterior stability of the $(1+3)$-Minkowski space-time governed by the evolution problem in General Relativity with matter, which in this case is the Einstein-Yang-Mills system in the Lorenz gauge (see \cite{G4}), which reads the following system on the unknown $(\cal M, A, \g)$\;,
\bea
\begin{cases} \label{EYMsystemforintro}
R_{ \mu \nu}  &= 2 < F_{\mu\b}, F_{\nu}^{\;\;\b} >- \frac{1}{2} \cdot g_{\mu\nu } < F_{\a\b },F^{\a\b } >\; ,   \\
0 \;\;\;\, \,&=  \der_{\alpha} F^{\a\b}  + [A_{\alpha}, F^{\a\b} ]    \;, \\
F_{\a\b} &= \der_{\a}A_{\b} - \der_{\b}A_{\a} + [A_{\a},A_{\b}] \; \\
\der^\a A_\a &= 0 \; ,  \end{cases} 
      \eea
where $\cal M$ is the unknown manifold, where $A$ is the unknown Yang-Mills potential valued in the Lie algebra $\cal G$ associated to any compact Lie group $G$\;, where $\g$ is the unknown Lorentzian metric, and where $\der_{\alpha}$ is the unknown space-time covariant derivative of Levi-Civita prescribed by $\g$ and $R_{ \mu \nu} $ is the Ricci tensor. We start with a general initial data for the Yang-Mills potential $A$ chosen to be small, and with general initial data for the metric $\g$ that is asymptotical flat and chosen to be close to the Minkowski initial data. Then, we show that in the Lorenz gauge and in wave coordinates, the perturbations disperse in time in the complement of the future causal domain of a compact set, and lead to a solution that is converging to the Minkowski space-time in the exterior.

\subsection{Definitions for the perturbations of the Minkowski metric}\

\begin{definition}\label{definitionoftheMinkwoskimetricminwavecoordinates}
In wave coordinates,  $\{x^0, x^1, \ldots, x^n \}$ ,we define $m$ to be Minkowski metric. This means that in the system of wave coordinates $\{x^0, ..., x^n\}$\;, we prescribe $m$ by
\beaa
m_{00}&=&-1 \;,\qquad  m_{ii}=1\;,\quad \text{if}\quad i=1, ...,n,\\
\quad\text{and}\quad m_{\mu\nu}&=&0\;,\quad \text{if}\quad \mu\neq \nu \quad \text{for} \quad  \mu, \nu \in \{0, 1, ..., n\} .
\eeaa
We define $h$ as the 2-tensor given by:
\bea
h_{\mu\nu} = g_{\mu\nu} - m_{\mu\nu} \, .
\eea
Let $m^{\mu\nu}$ be the inverse of $m_{\mu\nu}$. We define
\bea\label{definitionofsmallh}
h^{\mu\nu} &=& m^{\mu\mu^\prime}m^{\nu\nu^\prime}h_{\mu^\prime\nu^\prime} \\
\label{definitionsofbigH}
H^{\mu\nu} &=& g^{\mu\nu}-m^{\mu\nu}.
\eea
We define $\der^{(\bf{m})}$ to be the covariant derivative associated to the flat metric $m$\;. Given the definition of $m$ in Definition \ref{definitionoftheMinkwoskimetricminwavecoordinates}, the Christoffel symbols are vanishing in wave coordinates, and therefore, for all $\mu, \nu \in \{0, 1, \ldots, n \}$\;,
 \bea
{ \der^{(\bf{m})}}_{ \frac{\pa}{\pa x^\mu}}  \frac{\pa}{\pa x^\nu} := 0 \; .
 \eea
Let $Z_{\a\b} = x_{\b} \pa_{\a} - x_{\a} \pa_{\b}  \; ,$ $S = t \pa_t + \sum_{i=1}^{3} x^i \pa_{i} \; .$ The Minkowski vector fields will be denoted by $Z$\;, and they are defined such that
\bea
Z \in {\cal Z}  := \big\{ Z_{\a\b}\,,\, S\,,\, \pa_{\a} \, \,  | \, \,   \a\,,\, \b \in \{ 0, \ldots, 3 \} \big\} \; .
\eea
The family ${\cal Z}$ has  $11$ vector fields: $6$ vectors for the Lorentz boosts and rotations, $4$ space-time translations and one scaling vector field. One can order them and assign to each vector an $11$-dimensional integer index $(0, \ldots, 1, \ldots,0)$. Hence, a collection of $k$ vector fields from the family ${\cal Z}$, can be described by the set $I=(\iota_1, \ldots,\iota_k)$, where each $\iota_i$ is an $11$-dimensional integer, where $|I|=k = \sum_{i=1}^{k} | \iota_i |$, with $|\iota_i|=1$\;. We define
\bea
Z^I :=Z^{\iota_1}\ldots Z^{\iota_k} \quad \text{for} \quad I=(\iota_1, \ldots,\iota_k),  
\eea
where $\iota_i$ is an 11-dimensional integer index, with $|\iota_i|=1$, and $Z^{\iota_i}$ representing each a vector field from the family ${\cal Z}$. For a tensor $T$, of arbitrary order, either a scalar or valued in the Lie algebra, we define the Lie derivative as
\bea
\Lie_{Z^I} T :=\Lie_{Z^{\iota_1}} \ldots \Lie_{Z^{\iota_k}} T \quad \text{for} \quad I=(\iota_1, \ldots,\iota_k) .
\eea
By $\sum_{|I| \leq k}$ we mean the sum over all possible products of length at most $k$, of Minkowski vector fields. We define $E_{\mu\nu}$ as the Euclidian metric in wave coordinates. We then define for a  tensor of arbitrary order, for example $K_{\a}$\;, 
\bea
 | \pa K |^2 := | \derm K |^2 :=  E^{\a\b} E^{\mu\nu}    { \der^{(\bf{m})}}_{\mu} K_\a \ \cdot    { \der^{(\bf{m})}}_{\nu} K_\b \; .
 \eea
We then have
 \bea
 \notag
| \pa K |^2  &=&   |  {\der^{(\bf{m})}}_{t} K |^2  +  |  {\der^{(\bf{m})}}_{x^1} K |^2 +\ldots  + |  {\der^{(\bf{m})}}_{x^n} K |^2 =  \sum_{\a,\; \b \in  \{t, x^1, \ldots, x^n \}} |  \pa_{\a} K_\b |^2   \, ,
\eea
\end{definition}
In this paper, we will prove the theorem.

\subsection{The theorem}\label{Thetheoremofexteriorstabilityfornequalthree}\

\begin{theorem}
Assume that we are given an initial data set $(\Sigma, \overline{A}, \overline{E}, \overline{g}, \overline{k})$ for \eqref{EYMsystemforintro}. We assume that $\Sigma$ is diffeomorphic to $\R^3$\;. Then, there exists a global system of coordinates $(x^1, x^2, x^3) \in \R^3$ for $\Sigma$\;. We define
\bea
r := \sqrt{ (x^1)^2 + (x^2)^2 +(x^3)^2  }\;.
\eea
Furthermore, we assume that the data $(\overline{A}, \overline{E}, \overline{g}, \overline{k}) $ is smooth and asymptotically flat. Let $\chi$ be a smooth function such as
 \bea\label{defXicutofffunction}
\chi (r)  := \begin{cases} 1  \quad\text{for }\quad r \geq \frac{3}{4} \;  ,\\
0 \quad\text{for }\quad r \leq \frac{1}{2} \;. \end{cases} 
\eea
Let $M$ be the mass such as $ 0 < M \leq \eps^2 \leq 1 $\;, let $\de_{ij}$ be the Kronecker symbol, and let $\overline{h}^1_{ij} $ be defined in this system of coordinates $x^i$\;, by
 \bea
\overline{h}^1_{ij} := \overline{g}_{ij} - (1 + \chi (r)\cdot  \frac{ M}{r}  ) \de_{ij} \; .
\eea
We then define the weighted $L^2$ norm on $\Sigma$\;, namely $\overline{\E}_N$\;, for $\ga > 0$\;, by
 \bea\label{definitionoftheenergynormforinitialdata}
 \notag
\overline{\E}_N &:=&  \sum_{|I|\leq N} \big(   \| (1+r)^{1/2 + \ga + |I|}   \overline{D} (  \overline{D}^I  \overline{A}    )  \|_{L^2 (\Sigma)} +  \|(1+r)^{1/2 + \ga + |I|}    \overline{D}  ( \overline{D}^I \overline{h}^1   )  \|_{L^2 (\Sigma)} \big) \\
\notag
&:=&  \sum_{|I|\leq N}      \big(   \sum_{i=1}^{n}  \|(1+r)^{1/2 + \ga + |I|}     \overline{D} (  \overline{D}^I  \overline{A_i}    )  \|_{L^2 (\Sigma)} +  \sum_{i, j =1}^{n}  \|(1+r)^{1/2 + \ga + |I|}     \overline{D}  ( \overline{D}^I \overline{h}^1_{ij}   )  \|_{L^2 (\Sigma)} \big) \; , \\
\eea
where the integration is taken on $\Sigma$ with respect to the Lebesgue measure $dx_1 \ldots dx_n$\;, and where $\overline{D} $ is the Levi-Civita covariant derivative associated to the given Riemannian metric $\overline{g}$\;. We also assume that the initial data set $(\Sigma, \overline{A}, \overline{E}, \overline{g}, \overline{k})$ satisfies the Einstein-Yang-Mills constraint equations, namely
\bea\label{constraint1}
  \mathcal{R}+ \overline{k}^i_{\,\, \, i} \overline{k}_{j}^{\,\,\,j}  -  \overline{k}^{ij} \overline{k}_{ij}   &=&    \frac{4}{(n-1)}   < \overline{E}_{i}, \overline{E}^{ i}>   \\
 \notag
 && +  < \overline{D}_{i}  \overline{A}_{j} - \overline{D}_{j} \overline{A}_{i} + [ \overline{A}_{i},  \overline{A}_{j}] ,\overline{D}^{i}  \overline{A}^{j} - \overline{D}^{j} \overline{A}^{i} + [ \overline{A}^{i},  \overline{A}^{j}] >  \;  ,\\
 \label{constraint2}
\overline{D}_{i} \overline{k}^i_{\,\,\, j}    - \overline{D}_{j} \overline{k}^i_{\, \,\,i}  &=&  2 < \overline{E}_{i}, \overline{D}_{j}  \overline{A}^{i} - \overline{D}^{i} \overline{A}_{j} + [ \overline{A}_{j},  \overline{A}^{i}]  >  \;  ,\\
\label{constraint3}
\overline{D}^i \overline{E}_{ i} + [\overline{A}^i, \overline{E}_{ i} ]  &=& 0  \;  .
\eea
For any $N \geq 11$\;, there exists a constant $ \overline{c} (\cal{K}, N, \ga) $\;, that depends on $\cal{K}$\;, on $\ga$\;, and on $N$\;, such that if
\bea\label{Assumptiononinitialdataforglobalexistenceanddecay}
\overline{\E}_{N+2} &\leq&   \overline{c} (\cal{K}, N, \ga)  \; ,\\
M  &\leq&   \overline{c} (\cal{K}, N, \ga)  \; ,
\eea
then there exists a solution $(\cal{M}, A, g)$ to the Cauchy problem for the fully coupled Einstein-Yang-Mills system \ref{EYMsystemforintro} in the future of the whole causal complement of any compact $\cal{K} \subset \Sigma$ \;, converging to the null Yang-Mills potential and to the Minkowski space-time in the following sense: if we define the metric $ m_{\mu\nu}$ to be the Minkowski metric in wave coordinates $(x^0, x^1, x^3)$\, and define $t = x^0$\;, and if we define in this system of wave coordinates 
\bea
h^{1}_{\mu\nu} := g_{\mu\nu} - m_{\mu\nu}- h^0_{\mu\nu}  \;  ,
\eea
where for $t > 0$, 
\bea\label{guessonpropagationofthesphericallsymmetricpart}
h^0_{\mu\nu} := \chi(r/t) \cdot \chi(r)\cdot \frac{M}{r}\de_{\mu\nu}  \;  ,
\eea
and where for $t=0$\;,  \bea\label{definitionofthesphericallysymmtericpartofinitialdata}
h^0_{\mu\nu} ( t= 0) := \chi(r)\cdot \frac{M}{r}\cdot \de_{\mu\nu}  \; ,
\eea
then, for $\overline{h}^1_{ij} $ and $\overline{A}_i$ decaying sufficiently fast, we have the following estimates on $h^1$\;, and on $A$ in the Lorenz gauge, for the norm constructed using wave coordinates by taking the sum over all indices in wave coordinates, given in \eqref{boundonNLiederivativeofgradient}, \eqref{boundonNLiederivativeofzerothderivative}, \eqref{boundonNLiederivativeotheYangMillscurvature}. That there exists a constant $C(N)$ to bound  $N-2$ Lie derivative of the fields in direction of Minkowski vector fields, and to bound the growth of $\E _{N} (\cal K) (t)$ in \eqref{theboundinthetheoremonEnbyconstantEN}, and that there exists a constant $\eps$ that depends on $\overline{c} (\cal{K}, N, \ga)$\;, on  $\cal{K} \subset \Sigma$\;, on $N$ and on $\ga$\;, such that we have the following estimates in the whole complement of the future causal of the compact $\cal{K}  \subset \Sigma$\;, for all $|I| \leq N -2$\;, 
\bea\label{boundonNLiederivativeofgradient}
 \notag
  && \sum_{\mu= 0}^{n} |\derm  ( \Lie_{Z^I}  A_{\mu} ) (t,x)  |     +   \sum_{\mu, \nu = 0}^{n}  |\derm  ( \Lie_{Z^I}  h^1_{\mu\nu} ) (t,x)  |   \\
   \notag
    &\leq&  C(\cal{K} ) \cdot   C(N)  \cdot \frac{\eps }{(1+t+|r-t|)^{1-\eps} (1+|r-t|)^{1+\gamma}} \; ,\\
      \eea
and
 \bea\label{boundonNLiederivativeofzerothderivative}
 \notag
  \sum_{\mu= 0}^{n}  |\Lie_{Z^I} A_{\mu} (t,x)  | +   \sum_{\mu, \nu = 0}^{n}  |\Lie_{Z^I}  h^1_{\mu\nu} (t,x)  |  &\leq&C(\cal{K} ) \cdot C(N) \cdot c(\ga) \cdot \frac{\eps }{(1+t+|r-t|)^{1-\eps} (1+|r-t|)^{\gamma}} \; , \\
      \eea
      where $Z^I$ are the Minkowski vector fields. In particular, the gauge invariant norm on the Yang-Mills curvature decays as follows, for all $|I| \leq N - 2$\;,
 \bea\label{boundonNLiederivativeotheYangMillscurvature}
 \notag
 \sum_{\mu, \nu = 0}^{n}  |\Lie_{Z^I} F_{\mu\nu}  (t,x) |  &\leq&C(\cal{K} ) \cdot   C(N)    \cdot \frac{\eps }{(1+t+|r-t|)^{1-\eps} (1+|r-t|)^{1+\gamma}}  \\
  \notag
&& +     C(\cal{K} ) \cdot C(N) \cdot c(\ga)   \cdot \frac{\eps }{(1+t+|r-t|)^{2-2\eps} (1+|r-t|)^{2\gamma}} \; . \\
      \eea
      Furthermore, if one defines $w$ as follows, 
\bea\label{defoftheweightw}
w(r-t):=\begin{cases} (1+|r-t|)^{1+2\gamma} \quad\text{when }\quad r-t>0 \;, \\
         1 \,\quad\text{when }\quad r-t<0 \; , \end{cases}
\eea
and if we define  $\Sigma_t^{ext} (\cal K)$ as being the time evolution in wave coordinates of $\Sigma$ in the future of the causal complement of $\cal{K}$\;, then for all time $t$\;, we have
\bea\label{theboundinthetheoremonEnbyconstantEN}
\notag
\E _{N} (\cal K) (t) &:=&  \sum_{|J|\leq N} \big( \|w^{1/2}   \derm ( \Lie_{Z^J} h^1   (t,\cdot) )  \|_{L^2 (\Sigma_t^{ext} (\cal K)) } +  \|w^{1/2}   \derm ( \Lie_{Z^J}  A   (t,\cdot) )  \|_{L^2 (\Sigma_t^{ext} (\cal K))  }  \big) \\
&\leq& C(N) \cdot \eps \cdot (1+t)^{\eps} \; . 
\eea
More precisely, for any constant $E(N)$ and for any $\ga > 0$\;, there exists a constant $\de (\ga) > 0$ that depends on $\ga$\;, and there exists a constant $\overline{c}_1 (\cal{K}, N, \ga, \de)$\;, that depends on  $\cal{K} \subset \Sigma$\;, on $E(N)$\;, on $N$\;, on $\ga$ and on $\de$\;, such that if 
\bea\label{Assumptiononinitialdataforglobalexistenceanddecay}
 \overline{\E}_{N+2} (0) &\leq&   \overline{c}_1 (\cal{K}, E (N), N, \ga, \de)   \; ,\\
 M &\leq&  \overline{c}_1 (\cal{K}, E (N), N, \ga, \de)  \; ,
 \eea
then, we have in the whole complement of the future causal of the compact $\cal{K}  \subset \Sigma$\;, for all time $t$\,, 
\bea\label{Theboundontheglobaleenergybyaconstantthatwechoose}
 \E_{N}(\cal K)  (t) \leq E(N) \cdot (1+t)^{\de} \; .
\eea
\end{theorem}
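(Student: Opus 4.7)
The plan is to run a continuity/bootstrap argument on the weighted energy $\E_N (\cal K) (t)$ in the exterior region $\Sigma_t^{ext}(\cal K)$, modeled on the Lindblad--Rodnianski scheme for the Einstein vacuum case but with substantial modifications to accommodate the genuinely non-null nonlinearities peculiar to the Einstein--Yang--Mills system in the Lorenz gauge. First, I would reduce the coupled system \eqref{EYMsystemforintro} in wave coordinates and in the Lorenz gauge $\der^\a A_\a = 0$ to a system of quasilinear wave equations on the single tensor $h^1_{\mu\nu}$ and the Lie-algebra valued potential $A_\mu$, schematically
\begin{eqnarray*}
\widetilde\Box_g h^1_{\mu\nu} &=& F_{\mu\nu}(h)(\pa h,\pa h) + Q^{YM}_{\mu\nu}(A,\pa A) + L_{\mu\nu}(h^0), \\
\widetilde\Box_g A_\mu &=& [A^\a,\pa_\a A_\mu] + [A^\a,\pa_\mu A_\a] + [A^\a,[A_\a,A_\mu]] + R_\mu (\Gamma, A),
\end{eqnarray*}
together with checking that the constraint equations \eqref{constraint1}--\eqref{constraint3} together with the Lorenz and wave-coordinate conditions on the Cauchy surface are propagated by the evolution (standard, using that the modified system is equivalent to the original one once the gauge constraints hold initially). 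The subtraction of $h^0$ is designed so that $h^1$ has integrable data on $\Sigma$, and the slowly decaying Schwarzschild tail is tracked explicitly through the driving term $L_{\mu\nu}(h^0)$.

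Next, I would commute with the Minkowski vector fields $Z \in \cal Z$, using that $[Z,\widetilde\Box_m]$ is either zero or a multiple of $\widetilde\Box_m$ up to lower order, so that $\Lie_{Z^I} h^1$ and $\Lie_{Z^I} A$ satisfy, for every $|I|\leq N$, wave equations whose inhomogeneities are tame polynomials in $\Lie_{Z^J} h^1$, $\Lie_{Z^J} A$ and their $\derm$-derivatives with $|J|\leq |I|$. The workhorse is then the weighted energy identity for $\widetilde\Box_g$ with multiplier $(\pa_t u)\,w(r-t)$, which yields both a bulk spacetime integral controlling $\pa u$ in the exterior and, crucially thanks to the choice of $w$ in \eqref{defoftheweightw}, a good-sign boundary flux on the lateral cone bounding $\Sigma_t^{ext}(\cal K)$ from below; this is what localises the argument to the exterior and bypasses the absence of null structure inside the light cone. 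Coupled with the Klainerman--Sobolev inequality on $\Sigma_t^{ext}(\cal K)$, I would upgrade $\E_N (\cal K)(t)\leq C\eps (1+t)^\de$ to the pointwise bounds \eqref{boundonNLiederivativeofgradient}--\eqref{boundonNLiederivativeofzerothderivative} at $N-2$ derivatives, and then \eqref{boundonNLiederivativeotheYangMillscurvature} follows by inserting those bounds into $F = \der A - \der A + [A,A]$.

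The bootstrap assumption would be $\E_N (\cal K)(t) \leq 2 E(N)\eps (1+t)^\de$ with $\de \ll \ga$, and the goal is to recover the same bound with constant $E(N)$. The nonlinear terms split into three groups. The semilinear metric terms $F_{\mu\nu}(h)(\pa h,\pa h)$ are treated exactly as in Lindblad--Rodnianski: one isolates the non-null tangential--tangential part and absorbs it using the wave-coordinate condition, which forces the dangerous components of $h$ to satisfy improved estimates. The Yang--Mills terms $Q^{YM}(A,\pa A)$ contributing to the Ricci source are of Maxwell type and are controlled by the same weighted Hardy-type inequalities in the exterior. The genuinely new and hardest part is the quadratic term $[A^\a,\pa_\a A_\mu] + [A^\a,\pa_\mu A_\a]$ in the $A$-equation: unlike in the Maxwell (abelian) case these do \emph{not} vanish, they do not exhibit a null form, and the Lorenz-gauge condition alone does not turn them into a null structure. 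My strategy is to exploit the weight $w$ and the $(1+|r-t|)^{1+\ga}$ gain it provides in the exterior to absorb the slow $(1+t)^{-(1-\eps)}$ decay of one factor of $A$ against the derivative of the other, at the price of the small growth $(1+t)^\de$. The cubic commutator $[A^\a,[A_\a,A_\mu]]$ is harmless since it carries three decaying factors. Once the bootstrap is closed at the top order, a standard continuity argument combined with the local existence theorem for the reduced system extends the solution to all of $\Sigma_t^{ext}(\cal K)$, and the refined estimates \eqref{theboundinthetheoremonEnbyconstantEN}--\eqref{Theboundontheglobaleenergybyaconstantthatwechoose} follow by choosing $\overline c_1$ small in terms of $E(N)$ and $\de(\ga)$. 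I expect the main obstacle to be the fine tuning of $\de$, $\ga$ and $\eps$ so that the logarithmically divergent integrals arising from the non-null Yang--Mills commutator close strictly, and this is where the restriction to the exterior region, and the strong weight $w$ there, becomes essential.
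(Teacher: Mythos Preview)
Your broad scheme---bootstrap on $\E_N$, commutation with Minkowski fields, weighted energy identity with weight $w$, Klainerman--Sobolev for pointwise decay, and continuity---matches the paper's architecture. The gap is in your treatment of the quadratic Yang--Mills commutators $[A^\a,\pa_\a A_\mu]+[A^\a,\pa_\mu A_\a]$, which you correctly flag as the crux but propose to handle merely by ``exploiting the weight $w$ and the $(1+|r-t|)^{1+\ga}$ gain it provides in the exterior.'' This does not close: near the light cone in the exterior one has $|q|=r-t$ bounded below by $q_0$ but not growing, so the weight contributes no additional $t$-decay. Feeding the generic bootstrap bound $|A|\les\eps/((1+t)^{1-\de}(1+|q|)^\gamma)$ into the energy inequality produces, after Gr\"onwall, growth of order $\exp\big(C\eps(1+t)^{2\de}\big)$ rather than the polynomial $(1+t)^\de$ you need.

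The paper's resolution rests on a null-frame decomposition of the $A$-equation itself, which you carry out for $h$ but not for $A$. The dangerous quadratic pieces refine to exactly $A_L\cdot\derm A$ and $A_{e_a}\cdot\derm A_{e_a}$, and these appear \emph{only} in the wave equation for the transversal component $A_{\underline L}$; the tangential components $A_{\cal T}=\{A_L,A_{e_1},A_{e_2}\}$ obey wave equations whose sources carry a tangential derivative and close directly. The argument then runs in layers: first a \emph{separate} weighted energy estimate for $A_{e_a}$ alone (its source being ``good''), converted to pointwise decay of both $\derm A_{e_a}$ and $A_{e_a}$, the latter step using the special identity $\derm_r e_a=0$ so that $\pa_r A_{e_a}=\derm_r(A_{e_a})$ and one can integrate along $s=\mathrm{const}$; second, the Lorenz gauge is used to show $|A_L|\les\eps/(1+t)^{2-2\de}$, a full power better than generic $|A|$, and a Hardy-type inequality handles the higher Lie derivatives $\Lie_{Z^K}A_L$; only with these component-wise improvements in hand are the bad sources in the $A_{\underline L}$-equation controlled. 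Commuting with $Z^I$ also requires a sharper commutator formula than ``$[Z,\widetilde\Box_m]$ is a multiple of $\widetilde\Box_m$'': one must track which \emph{components} of $\derm\Lie_{Z^K}\Phi$ accompany the weak factor $1/(1+|q|)$ versus the strong $1/(1+t)$, and show that only tangential components carry the weak one, so that the component-wise Gr\"onwall closes. None of this layered structure is accessible without decomposing the Yang--Mills potential in the null frame.
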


\section{The proof}

The goal of this section is to prove Theorem \ref{Thetheoremofexteriorstabilityfornequalthree}. The proof is decomposed into four parts that address difficulties which are not present for the Einstein vacuum equations nor for the Einstein-Maxwell equations. These parts consist in recasting the Einstein-Yang-Mills system in the Lorenz gauge and in wave coordinates as a coupled system of covariant non-linear wave equations decomposed in a null-frame, a system that have terms that are new to the Einstein-Maxwell equations (Subsection \ref{recastingasystemofnonlinearwaves}), then dealing with the new term $A_{e_a}  \cdot     \derm A_{e_a} $ (Subsection \ref{dealingiwthbadtermAaderAa}), as well as dealing with the new term $ A_L   \cdot   \derm A $ (Subsection \ref{dealingwithALderA}), and upgrading the dispersive estimates for the Lie derivatives with a new separated estimate on the commutator term (Subsection \ref{dealingwithLiederivativesupgrade}).

We define the weight $w$ as in \eqref{defoftheweightw}. We define the higher order energy norm as the following $L^2$ norms on $A$ and $h^1$ in the exterior, 
 \bea\label{definitionoftheenergynorm}
 \notag
\E_{N} (t) :=  \sum_{|I|\leq N} \big(   \|w^{1/2}   \derm ( \Lie_{Z^I}  A   (t,\cdot) )  \|_{L^2 (\Sigma_t^{ext}) } +  \|w^{1/2}   \derm ( \Lie_{Z^I} h^1   (t,\cdot) )  \|_{L^2 (\Sigma_t^{ext})} \big) \, , \\
\eea
where the integration is taken with respect to the Lebesgue measure $dx_1 \ldots dx_n$\:. We start with an \`a priori estimate (see \eqref{aprioriestimate}) and we want to upgrade this estimate to ultimately prove that it is a true estimate. We look at any time $T \in [0, T_{\text{loc}})$, such that for all $t$ in the interval of time $[0, T]$, we have
\bea\label{aprioriestimate}
\E_{ N } (t)  \leq E (N )  \cdot \eps \cdot (1 +t)^\delta \;,
\eea
where $E ( N )$\, is a constant that depends on $ N $\,, where $\eps > 0$ is a constant to be chosen later small enough, and where $\delta \geq 0$ is to be chosen later. 

\begin{definition}\label{definitionofbigOforLiederivatives}
For a family of tensors Let $\Lie_{Z^{I_1}}K^{(1)}, \ldots, \Lie_{Z^{I_m}} K^{(m)}$, where each tensor $ K^{(l)}$ is again either $A$ or $h$ or $H$\;, or $\derm A$\;, $\derm h$ or $\derm H$\;, we define
\bea
\notag
&& O_{\mu_{1} \ldots \mu_{k} } (\Lie_{Z^{I_1}}K^{(1)} \cdot \ldots \cdot \Lie_{Z^{I_m}} K^{(m)}  ) \\
\notag
&:=& \prod_{l=1}^{m} \Big[  \prod_{|J_l| \leq |I_l|} Q_{1}^{J_l} ( \Lie_{Z^{J_l}} K^{(l)} ) \cdot \Big( \sum_{n=0}^{\infty} P_{n}^{J_l}  ( \Lie_{Z^{J_l}} K^{(l)} ) \Big) \Big] \; . \\
\eea
where again $P_n^{J_l} (K^{l} )$ and $Q_1^{J_l} (K)$, are tensors that are Polynomials of degree $n$ and $1$, respectively, with $Q^{J_l}_1 (0) = 0$ and $Q^{J_l}_1 \neq 0$\;, of which the coefficients are components in wave coordinates of the metric $\textbf m$ and of the inverse metric $\textbf m^{-1}$, and of which the variables are components in wave coordinates of the covariant tensor $\Lie_{Z^{J_l}} K^{l}$, leaving some indices free, so that at the end the whole product $$  \prod_{l=1}^{m} \Big[  \prod_{|J_l| \leq |I_l|}  Q_{1}^{J_l} ( \Lie_{Z^{J_l}} K^{(l)} ) \cdot \Big( \sum_{n=0}^{\infty} P_{n}^{J_l}  ( \Lie_{Z^{J_l}} K^{(l)} ) \Big) \Big]$$ gives a tensor with free indices $\mu_{1} \ldots \mu_{k}$\;. To lighten the notation, we shall drop the indices and just write $O (\Lie_{Z^{I_1}}K^{(1)} \cdot \ldots \cdot \Lie_{Z^{I_m}} K^{(m)}  )$.
\end{definition}
We recall the following lemma from a previous paper, see \cite{G4}.
\begin{lemma}\label{linkbetweenbigHandsamllh}
We have
\bea
H^{\mu\nu}= -h^{\mu\nu}+O^{\mu\nu}(h^2) \, .
\eea
\end{lemma}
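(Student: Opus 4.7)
The plan is to treat the identity purely as a matrix computation in wave coordinates, then check that the remainder fits the combinatorial form demanded by Definition \ref{definitionofbigOforLiederivatives}.

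First I would view $g_{\mu\nu}=m_{\mu\nu}+h_{\mu\nu}$ as the matrix sum $g=m(I+m^{-1}h)$ in the fixed wave chart, where $m$ is the constant Minkowski matrix. Under the smallness of $h$ supplied by the a priori estimate \eqref{aprioriestimate}, the operator $m^{-1}h$ has small norm, so the Neumann series converges and gives
\begin{equation*}
g^{-1}=(I+m^{-1}h)^{-1}m^{-1}=\sum_{k=0}^{\infty}(-1)^{k}(m^{-1}h)^{k}m^{-1}.
\end{equation*}
Subtracting the $k=0$ term yields
\begin{equation*}
H^{\mu\nu}=g^{\mu\nu}-m^{\mu\nu}=-m^{\mu\alpha}h_{\alpha\beta}m^{\beta\nu}+\sum_{k\geq 2}(-1)^{k}\bigl[(m^{-1}h)^{k}m^{-1}\bigr]^{\mu\nu}.
\end{equation*}

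Next I would identify the linear term with $-h^{\mu\nu}$: by the very definition \eqref{definitionofsmallh}, $h^{\mu\nu}=m^{\mu\mu'}m^{\nu\nu'}h_{\mu'\nu'}=m^{\mu\alpha}h_{\alpha\beta}m^{\beta\nu}$, so the first piece on the right is exactly $-h^{\mu\nu}$. It then remains to recognize the tail $\sum_{k\geq 2}(-1)^{k}[(m^{-1}h)^{k}m^{-1}]^{\mu\nu}$ as an instance of $O^{\mu\nu}(h^{2})$. Each summand, written out in indices, is a product of $k\geq 2$ copies of the components $h_{\alpha\beta}$ contracted with $k+1$ factors of $m^{-1}$ (and no factors of $m$ needed here); it therefore has the shape of a polynomial in components of $h$ of degree $n=k\geq 2$ with coefficients built from $m$ and $m^{-1}$, with two free indices $\mu,\nu$ left over. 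Taking $m=1$ factor in the definition, $K^{(1)}=h$, $I_{1}=0$, $Q_{1}^{J_{1}}(h)=h_{\alpha\beta}$ and $\sum_{n\geq 0}P_{n}^{J_{1}}(h)$ absorbing the remaining Neumann terms, this matches Definition \ref{definitionofbigOforLiederivatives} verbatim.

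The only real obstacle is a bookkeeping one: verifying that each Neumann term of degree $k$ really can be repackaged as a $P_{n}^{J_{l}}$ polynomial in the sense of Definition \ref{definitionofbigOforLiederivatives} (polynomial in components of $h$, coefficients in $m,m^{-1}$, free indices $\mu,\nu$). Since every factor of $m^{-1}$ simply raises an index and every factor of $h$ is a component of $h$, this decomposition is immediate, and convergence of the series is guaranteed by smallness of $h$. No cancellation, no integration by parts, and no use of the equations \eqref{EYMsystemforintro} is required, so the proof is effectively a one-line matrix expansion followed by an unpacking of the $O$-notation.
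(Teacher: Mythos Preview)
Your Neumann-series argument is correct and is the standard proof of this identity; the paper itself does not give a proof here but merely cites \cite{G4}, so there is nothing to compare against.

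One small bookkeeping correction: in matching the tail to Definition~\ref{definitionofbigOforLiederivatives}, the notation $O^{\mu\nu}(h^{2})=O^{\mu\nu}(h\cdot h)$ corresponds to $m=2$ factors, not $m=1$. Concretely, factor the tail as
\[
\sum_{k\geq 2}(-1)^{k}\bigl[(m^{-1}h)^{k}m^{-1}\bigr]^{\mu\nu}
= m^{\mu\alpha}h_{\alpha\beta}\,m^{\beta\gamma}h_{\gamma\delta}\,\Bigl[\sum_{j\geq 0}(-1)^{j}(m^{-1}h)^{j}m^{-1}\Bigr]^{\delta\nu},
\]
which exhibits two explicit linear factors $Q_{1}^{(1)}(h)=h_{\alpha\beta}$ and $Q_{1}^{(2)}(h)=h_{\gamma\delta}$, with the remaining convergent series playing the role of $\sum_{n}P_{n}(h)$. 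With $m=1$ the notation would read $O^{\mu\nu}(h)$, which is not what the lemma asserts. This is purely notational and does not affect the substance of your argument.
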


On one hand, the Einstein-Yang-Mills system in Lorenz gauge imply that 
      \bea\label{ThewaveequationontheYangMillspotentialwithhyperbolicwaveoperatorusingingpartialderivativesinwavecoordinates}
  \notag
 && g^{\la\mu} \derm_{\la}   \derm_{\mu}   A_{\si}     \\
   \notag
 &=&    ( \derm_{\si}  h^{\a\mu} )  \cdot (  \derm_{\a}A_{\mu} )    \\
 \notag
&&  +   \frac{1}{2}    \big(   \derm^{\mu} h^{\nu}_{\, \, \, \si} +  \derm_\si h^{\nu\mu} -   \derm^{\nu} h^{\mu}_{\,\,\, \si}  \big)   \cdot  \big( \derm_{\mu}A_{\nu} -  \derm_{\nu}A_{\mu}  \big) \\
 \notag
&&     +   \frac{1}{2}    \big(   \derm^{\mu} h^{\nu}_{\, \, \, \si} +  \derm_\si h^{\nu\mu} -   \derm^{\nu} h^{\mu}_{\,\,\, \si}  \big)   \cdot   [A_{\mu},A_{\nu}] \\
 \notag
 && -  \big(  [ A_{\mu}, \derm^{\mu} A_{\si} ]  +    [A^{\mu},  \derm_{\mu}  A_{\si} - \derm_{\si} A_{\mu} ]    +    [A^{\mu}, [A_{\mu},A_{\si}] ]  \big)  \\
 \notag
  && + O( h \cdot  \derm h \cdot  \derm A) + O( h \cdot  \derm h \cdot  A^2) + O( h \cdot  A \cdot \derm A) + O( h \cdot  A^3) \, . \\
  \eea
  and 
  \bea\label{Thewaveequationonthemetrichwithhyperbolicwaveoperatorusingingpartialderivativesinwavecoordinates}
\notag
 && g^{\alpha\beta}\derm_\alpha \derm_\beta h_{\mu\nu} \\
 \notag
  &=& P(\derm_\mu h,\derm_\nu h)  +  Q_{\mu\nu}(\derm h,\derm h)   + O_{\mu\nu}  (h \cdot (\derm h)^2)  \\
\notag
 &&   -4     <   \derm_{\mu}A_{\b} - \derm_{\b}A_{\mu}  ,  \derm_{\nu}A^{\b} - \derm^{\b}A_{\nu}  >    \\
 \notag
 &&   + m_{\mu\nu }       \cdot  <  \derm_{\a}A_{\b} - \derm_{\b}A_{\a} , \derm_{\a} A^{\b} - \derm^{\b}A^{\a} >   \\
 \notag
&&           -4  \cdot  \big( <   \derm_{\mu}A_{\b} - \derm_{\b}A_{\mu}  ,  [A_{\nu},A^{\b}] >   + <   [A_{\mu},A_{\b}] ,  \derm_{\nu}A^{\b} - \derm^{\b}A_{\nu}  > \big)  \\
\notag
&& + m_{\mu\nu }    \cdot \big(  <  \derm_{\a}A_{\b} - \derm_{\b}A_{\a} , [A^{\a},A^{\b}] >    +  <  [A_{\a},A_{\b}] , \derm^{\a}A^{\b} - \derm^{\b}A^{\a}  > \big) \\
\notag
 &&  -4     <   [A_{\mu},A_{\b}] ,  [A_{\nu},A^{\b}] >      + m_{\mu\nu }   \cdot   <  [A_{\a},A_{\b}] , [A^{\a},A^{\b}] >  \\
     && + O \big(h \cdot  (\derm A)^2 \big)   + O \big(  h  \cdot  A^2 \cdot \derm A \big)     + O \big(  h   \cdot  A^4 \big)  \,  , 
\eea
where $P$ and $Q$ are defined as follows:

\beaa\label{definitionofthetermbigPinsourcetermsforeinstein}
 P(\derm_\mu h,\derm_\nu h) :=\frac{1}{4} m^{\alpha\alpha^\prime}\derm_\mu h_{\alpha\alpha^\prime} \, m^{\beta\beta^\prime}\derm_\nu h_{\beta\beta^\prime}  -\frac{1}{2} m^{\alpha\alpha^\prime}m^{\beta\beta^\prime} \derm_\mu h_{\alpha\beta}\, \derm_\nu h_{\alpha^\prime\beta^\prime} \, , 
\eeaa
\beaa\label{definitionofthetermbigQinsourcetermsforeinstein}
\notag
&& Q_{\mu\nu}(\derm h,\derm h) \\
\notag
&:=& \derm_{\alpha} h_{\beta\mu}\, \, m^{\alpha\alpha^\prime}m^{\beta\beta^\prime} \derm_{\alpha^\prime} h_{\beta^\prime\nu} -m^{\alpha\alpha^\prime}m^{\beta\beta^\prime} \big(\pa_{\alpha} h_{\beta\mu}\,\,\pa_{\beta^\prime} h_{\alpha^\prime \nu} -\derm_{\beta^\prime} h_{\beta\mu}\,\,\derm_{\alpha} h_{\alpha^\prime\nu}\big)\\
\notag
&& +m^{\a\a'}m^{\b\b'}\big (\derm_\mu h_{\a'\b'}\, \derm_\a h_{\b\nu}- \derm_\a h_{\a'\b'} \,\derm_\mu h_{\b\nu}\big )  \\
\notag
&& + m^{\a\a'}m^{\b\b'}\big (\derm_\nu h_{\a'\b'} \,\derm_\a h_{\b\mu} - \derm_\a h_{\a'\b'}\, \derm_\nu h_{\b\mu}\big )\\
\notag
&& +\frac 12 m^{\a\a'}m^{\b\b'}\big (\derm_{\b'} h_{\a\a'}\, \derm_\mu h_{\b\nu} - \derm_{\mu} h_{\a\a'}\, \derm_{\b'} h_{\b\nu} \big ) \\
&& +\frac 12 m^{\a\a'}m^{\b\b'} \big (\derm_{\b'} h_{\a\a'}\, \derm_\nu h_{\b\mu} - \derm_{\nu} h_{\a\a'} \,\derm_{\b'} h_{\b\mu} \big ) \, . 
\eeaa

On the other hand, assume that we are given an initial data set $(\Sigma, \overline{A}, \overline{E}, \overline{g}, \overline{k})$ that satisfies the Einstein-Yang-Mills constraint equations given in \eqref{constraint1}, \eqref{constraint3},  \eqref{constraint3}, then, we can construct a new hyperbolic initial data set $(\Sigma, A_\Sigma, \pa_t A_\Sigma, g_\Sigma, \pa_t g_\Sigma)$, as prescribed as follows in
\ref{initialdatadforzerothderivativeAsigma}, \ref{constructionopatialtimeAonsigma}, \ref{constructionoggsigma} and \ref{constructionopatialtgonsigma}, for the coupled system of non-linear hyperbolic wave equations given in \eqref{ThewaveequationontheYangMillspotentialwithhyperbolicwaveoperatorusingingpartialderivativesinwavecoordinates} and \eqref{Thewaveequationonthemetrichwithhyperbolicwaveoperatorusingingpartialderivativesinwavecoordinates}, such that solutions to the Einstein-Yang-Mills equations that solve the system in \eqref{ThewaveequationontheYangMillspotentialwithhyperbolicwaveoperatorusingingpartialderivativesinwavecoordinates} and \eqref{Thewaveequationonthemetrichwithhyperbolicwaveoperatorusingingpartialderivativesinwavecoordinates} are indeed in the Lorenz gauge and in wave coordinates for all time $t$\;. 
 \bea\label{initialdatadforzerothderivativeAsigma}
A_{\Sigma} &=& \begin{cases} (A_{\Sigma})_{t} = 0 \; ,  \\
(A_{\Sigma})_{i}= \overline{A}_{i} \quad\text{prescribed arbitrarily for }\quad i \neq t \; , \end{cases} \\
\label{constructionoggsigma}
g_{\Sigma} &=& \begin{cases} (g_{\Sigma})_{tt} = - N^2 \, , \\
(g_{\Sigma})_{ij} =   \overline{g}_{ij}  \quad\text{given by the initial data}, \\
(g_{\Sigma})_{tj} =  (g_{\Sigma})_{jt} = 0\, , \end{cases} \\
\label{constructionopatialtimeAonsigma}
\p_{t} A_{\Sigma} &=& \begin{cases} (\pa_{t } A_{\Sigma})_{t} =  N^2 \overline{g}^{ij}  \pa_{i}    \overline{A}_{j}    \, ,\\
(\pa_{t } A_{\Sigma})_{i} =  N \overline{E}_{i} \;\text{ where $\overline{E}_i$ is prescribed arbitrarily for } i \neq t \,,\\
\quad\quad\quad\quad\quad\quad\;\;\quad  \text{such that } D^i \overline{E}_{ i} + [\overline{A}^i, \overline{E}_{ i} ] = 0 \,, \\ \end{cases} \\
\label{constructionopatialtgonsigma}
\pa_{t} {g_\Sigma} &=& \begin{cases}  ( \pa_{t} {g_\Sigma})_{ij}  =  2 N \overline{k}_{ij}  \, ,\\
 ( \pa_{t} {g_\Sigma})_{tt} =  - N^2  g^{ij}  \pa_t g_{ij}   =   - 2 N^3  \overline{g}^{ij} \overline{k}_{ij} \, ,\\
  ( \pa_{t} {g_\Sigma})_{tj} =   ( \pa_{t} {g_\Sigma})_{jt} =  - N \pa_j N   + N^2  \overline{g}^{k i}  \pa_k  \overline{g}_{j i} -  \frac{N^2}{2}   \overline{g}^{ik}  \pa_j  \overline{g}_{ik}    \, ,
 \end{cases} 
\eea
where $N$ is an arbitrary lapse function on the Cauchy hypersurface $\Sigma$\,.

\subsection{The system of non-linear hyperbolic partial differential equations in a null-frame}\label{recastingasystemofnonlinearwaves}\

We start by defining a null-frame tetrad $\{\, \underline{L}\,, L\,, e_a\,, a \in \{1, 2 \}\, \}$\;, that is a frame constructed using wave coordinates (see the following Definition \ref{definitionofthenullframusingwavecoordinates}).
\begin{definition}\label{definitionofthenullframusingwavecoordinates}
At a point $p$ in the space-time, let
\bea
L &=&  \pa_{t} + \pa_{r} \, =   \pa_{t} +  \frac{x^{i}}{r} \pa_{i} \, , \\
\underline{L} &=&  \pa_{t} - \pa_{r} =  \pa_{t} -  \frac{x^{i}}{r} \pa_{i} \, ,
\eea
and let $\{e_1, e_{2} \}$ be an orthonormal frame on $\SSS^{2}$. 
We define the sets 
\bea
\cal T&=& \{ L,e_1, e_{2}\} \, ,\\
 \cal U&=&\{ \underline{L}, L, e_1, e_{2}\} \, .
\eea
\end{definition}
Using the Lorenz gauge condition, that is a condition that breaks the gauge invariance of the Einstein-Yang-Mills equations, we get the following estimates. That is that in the Lorenz gauge, the potential $A$ satisfies the following inequalities
\bea\label{estimateoncovariantgradientofAL}
| \derm  A_{L}  | &\les&  | \rderm A  | \, + O \big(  | h  | \cdot  | \derm  A  | \big) \; ,
\eea
and
\bea\label{LorenzgaugeestimateforgradientofLiederivativesofAL}
\notag
 | \derm ( \Lie_{Z^J}  A_{L}  ) | &\les& \sum_{|I| \leq |J|} | \rderm ( \Lie_{Z^I} A ) | \, + \sum_{|K| + |M| \leq |J|} O \big(  | \Lie_{Z^K}  h   | \cdot  | \derm  (  \Lie_{Z^M}  A ) | \big)  \;, 
\eea
where
\bea
 | \derm ( \Lie_{Z^J}  A_\mu )   |^2  :=  E^{\a\b} < \derm_{\b}( \Lie_{Z^J}   A_{\mu} ) \, ,  \derm_{\a} ( \Lie_{Z^J}  A_{\mu} )  > \; ,
\eea
and where $L$ and $\rderm$ are constructed using wave coordinates, and here  $\rderm$ is the covariant derivative restricted on the 2-spheres $\SSS^{2}$. In fact, estimates \eqref{estimateoncovariantgradientofAL} and \eqref{LorenzgaugeestimateforgradientofLiederivativesofAL}), are not only estimates, but one can express literally the terms on the left hand side of the inequality as a combination of terms in the right hand side of the inequality). Using the wave coordinates, that is a system of coordinates for the Einstein-Yang-Mills equations satisfying some condition, namely the harmonic gauge condition, that breaks the diffeomorphism invariance of the Einstein-Yang-Mills equations, we get the following estimates \eqref{waveconditionestimateonzeroLiederivativeofmetric} and \eqref{wavecoordinatesestimateonLiederivativesZonmetric}. That is, we have
\bea\label{waveconditionestimateonzeroLiederivativeofmetric}
| \derm  H_{\cal T L} | &\les& | \rderm  H |  + O (|H| \cdot |\derm H| ) \; ,
\eea
and
\beaa
| \derm  h_{\cal T L} | &\les& | \rderm  h |  + O (|h| \cdot |\derm h| ) \; .
\eeaa
Also,
\bea\label{wavecoordinatesestimateonLiederivativesZonmetric}
\notag
| \derm ( \Lie_{Z^J}  H_{\cal T L} )  | &\les&\sum_{|K| \leq |J| }  |  \rderm ( \Lie_{Z^K} H ) |  + \sum_{|K|+ |M| \leq |J|}  O (| \Lie_{Z^K} H| \cdot |\derm ( \Lie_{Z^M} H ) | ) \; , \\
\eea
and
\beaa
| \derm ( \Lie_{Z^J} h_{\cal T L} )  | &\les& \sum_{|K| \leq |J| } | \rderm  ( \Lie_{Z^K}  h)  |  + \sum_{|K|+ |M| \leq |J|}  O (|\Lie_{Z^K} h| \cdot |\derm ( \Lie_{Z^M} h ) | )\; .
\eeaa
Estimates \eqref{waveconditionestimateonzeroLiederivativeofmetric} and \eqref{wavecoordinatesestimateonLiederivativesZonmetric}, are not only estimates but they are actually equalities between tensors, therefore one can write the Einstein-Yang-Mills equations as a coupled system of non-linear hyperbolic equations on the Einstein-Yang-Mills potential $A$\;, that is a one-tensor valued in the Lie algebra ${\cal G}$\;, and on the perturbation metric $h^1 = g - m - h^0$\;, that is a scalar valued two-tensor, where $g$ is the unknown metric solution to the evolution problem for the Einstein-Yang-Mills system, and where $h^0$ is the spherically symmetric Schwarzschildian part of the perturbation defined in \eqref{guessonpropagationofthesphericallsymmetricpart}.

To study the structure of the non-linearities in the source terms of these hyperbolic operators $g^{\la\mu} \pa_{\la}   \pa_{\mu}   A_{\si} $ and $ g^{\alpha\beta}\pa_\alpha\pa_\beta
g_{\mu\nu}$, we use the null frame that will help us decompose the terms into “good” terms and “bad” terms: the good terms will be terms that we could control using estimates that exploit the Lorenz gauge and the wave coordinates conditions -- this will be apparent by the control by “good derivatives” on the “good terms” that we will show. We recall that $\cal T:= \{\, L\,, e_a\,, a \in \{1, 2 \}\, \}$ is a set of three vectors of the frame tangent to the outgoing null-cone for the Minkowski metric $m$\;, where $m$ is defined to be Minkowski metric in wave coordinates, and that $\cal U := \{\, \underline{L}\,, {\cal T}\,  \}$ are the full components of the frame. We get a coupled system of non-linear wave equations on the Einstein-Yang-Mills potential $A$ and on the perturbation metric $h^1$, that is schematically speaking the following:
\bea\label{waveequationonbadtermfortheEinsteinYangMillspoential}
   \notag
 &&   g^{\la\mu} \derm_{\la}   \derm_{\mu}   A_{{\underline{L}}}     \\
 \notag
    &=& \derm h  \cdot  \rderm A        + \rderm  h \cdot  \derm A  +    A   \cdot     \rderm A   +  \derm  h  \cdot  A^2   +  A^3 \\
    \notag
  && + O( h \cdot  \derm h \cdot  \derm A) + O( h \cdot  A \cdot \derm A)  + O( h \cdot  \derm h \cdot  A^2) + O( h \cdot  A^3) \\
         \notag
 && +  A_L   \cdot   \derm A      +  A_{e_a}  \cdot     \derm A_{e_a}     \; ,\\
  \eea
 and
    \bea
\notag
&&    g^{\alpha\beta} \derm_\alpha \derm_\beta h^1_{{\underline{L}} {\underline{L}} }     \\
\notag
            &=&   \rderm h \cdot \derm h +  h \cdot ( \derm h )^2      +   \rderm A    \cdot  \derm  A   +  A^2  \cdot  \derm A     +  A^4 \\
    \notag
     && + O \big(h \cdot  (\derm A)^2 \big)   + O \big(  h  \cdot  A^2 \cdot \derm A \big)     + O \big(  h   \cdot  A^4 \big)   \\
         \notag
 && + (  \derm h_{\cal TU} ) ^2 +   ( \derm A_{e_{a}} )^2    \\
 \notag
 && +     g^{\alpha\beta} \derm_\alpha \derm_\beta h^0 \; . 
\eea
and for the “good” components of $A$ and $h^1$, we have the “better” structure that is
  \bea
   \notag
 &&   g^{\la\mu} \derm_{\la}   \derm_{\mu}   A_{{\cal T}}     \\
 \notag
    &=&   \derm h  \cdot  \rderm A      +  \rderm  h  \cdot  \derm A   +    A   \cdot   \rderm A  +  \derm  h  \cdot  A^2   +  A^3 \\
    \notag
  && + O( h \cdot  \derm h \cdot  \derm A) + O( h \cdot  A \cdot \derm A)  + O( h \cdot  \derm h \cdot  A^2) + O( h \cdot  A^3)  \; ,
  \eea
  and
 \bea
\notag
 &&   g^{\alpha\beta} \derm_\alpha \derm_\beta h^1_{ {\cal T} {\cal U}}    \\
 \notag
    &=&  \rderm h \cdot \derm h + h \cdot (\derm h)^2      +  \rderm A   \cdot  \derm  A   +    A^2 \cdot \derm A     + A^4  \\
\notag
     && + O \big(h \cdot  (\derm A)^2 \big)   + O \big(  h  \cdot  A^2 \cdot \derm A \big)     + O \big(  h   \cdot  A^4 \big)   \\
  \notag
  && +    g^{\alpha\beta} \derm_\alpha \derm_\beta h^0  \; .
\eea
We see in the troublesome wave equation in \eqref{waveequationonbadtermfortheEinsteinYangMillspoential} on $A_{{\underline{L}}}$\;, that there exist “bad” terms which are $ A_{e_a}  \cdot     \derm A_{e_a} $\;, where $a \in \{1, 2 \}$\,, and $ A_L   \cdot   \derm A    $\;.

\subsection{Dealing with the “bad” term $A_{e_a}  \cdot     \derm A_{e_a} $\;}\label{dealingiwthbadtermAaderAa}\

To control correctly the term $ |A_{e_a} | \cdot |   \derm A_{e_a} |$\;, we use first an energy estimate that involves only the component $A_{e_a}$\;, an energy estimate that we established in \cite{G5} (see the following Lemma \ref{Theveryfinal } from \cite{G5}).
\begin{lemma}\label{Theveryfinal }
For  $\eps$ small enough, depending on $q_0$\,, on $\de$\,, on $\ga$\;, and on $\mu < 0$\;, the following energy estimate holds for $\ga > 0$ and for $\Phi$ decaying sufficiently fast at spatial infinity,
  \beaa
   \notag
 &&     \int_{\Sigma^{ext}_{t_2} }  |\derm \Phi_{V} |^2     \cdot w(q)  \cdot d^{3}x    + \int_{N_{t_1}^{t_2} }  T_{\hat{L} t}^{(\bf{g})} (\Phi_{V})  \cdot  w(q) \cdot dv^{(\bf{m})}_N \\
 \notag
 &&+ \int_{t_1}^{t_2}  \int_{\Sigma^{ext}_{\tau} }     | \rderm \Phi_{V} |^2   \cdot  \frac{\widehat{w} (q)}{(1+|q|)} \cdot  d^{3}x  \cdot d\tau \\
  \notag
  &\les &       \int_{\Sigma^{ext}_{t_1} }  |\derm \Phi_{V} |^2     \cdot w(q)  \cdot d^{3}x \\
    \notag
   &&  +  \int_{t_1}^{t_2}  \int_{\Sigma^{ext}_{\tau} }  \frac{(1+\tau )}{\eps} \cdot |  g^{\mu\a} \derm_{\mu } \derm_\a \Phi_{V}|^2    \cdot w(q) \cdot  d^{3}x  \cdot d\tau \\
 \notag
&& + \int_{t_1}^{t_2}  \int_{\Sigma^{ext}_{\tau} }   C(q_0) \cdot   c (\delta) \cdot c (\gamma) \cdot E (  4)  \cdot \frac{ \eps }{ (1+\tau)  } \cdot | \derm \Phi_{V} |^2     \cdot  w(q)\cdot  d^{3}x  \cdot d\tau \; . 
 \eeaa
 \end{lemma}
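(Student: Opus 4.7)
The plan is to derive this weighted energy estimate by applying the standard vector field multiplier method, using the energy-momentum tensor associated to the \emph{scalar} wave equation satisfied by the component $\Phi_V$. First, I would treat $\Phi_V$ as a scalar (with values in the Lie algebra) and introduce the symmetric $2$-tensor
\begin{equation*}
T^{(\bm g)}_{\mu\nu}(\Phi_V) := \langle \derm_\mu \Phi_V,\, \derm_\nu \Phi_V\rangle - \tfrac{1}{2} g_{\mu\nu}\, g^{\alpha\beta}\langle \derm_\alpha \Phi_V, \derm_\beta \Phi_V\rangle.
\end{equation*}
As multiplier vector field I would take $X := \hat L \simeq L = \pa_t+\pa_r$ (the rescaled null generator), and multiply the conserved current by the weight $w(q)$, $q = r-t$. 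Integrating $\derm^\mu\bigl(T^{(\bm g)}_{\mu\nu}(\Phi_V) X^\nu w(q)\bigr)$ over the exterior slab between $\Sigma^{ext}_{t_1}$ and $\Sigma^{ext}_{t_2}$ (with null outgoing boundary $N_{t_1}^{t_2}$) produces the three terms on the left-hand side: the top slice energy $\int |\derm\Phi_V|^2 w$, the null-flux boundary term $\int T^{(\bm g)}_{\hat L t}(\Phi_V)\, w$, and, crucially, the positive bulk term involving only the angular derivatives $|\rderm\Phi_V|^2$ with the \emph{improved} weight $\widehat w(q)/(1+|q|)$, which comes from the derivative $w'(q)$ of the weight contracted against the deformation tensor of $L$ in Minkowski.

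Next I would isolate the error contributions. The divergence identity produces a source piece $-\langle g^{\mu\alpha}\derm_\mu\derm_\alpha \Phi_V,\, X(\Phi_V)\rangle w(q)$, which I absorb via Cauchy--Schwarz into $\tfrac{(1+\tau)}{\eps}|\Box_{\bm g}\Phi_V|^2 w + \tfrac{\eps}{1+\tau}|\derm\Phi_V|^2 w$; this is what produces the two right-hand side source terms. The remaining error terms come from two places: (i) the difference $g^{\mu\nu}-m^{\mu\nu} = -h^{\mu\nu}+O(h^2)$ (Lemma \ref{linkbetweenbigHandsamllh}), which multiplies $|\derm\Phi_V|^2$ by factors of $h$; and (ii) the deformation tensor of $\hat L$ with respect to $\bm g$ rather than $\bm m$, which introduces factors of $\derm h$. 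Both are controlled using the \`a priori bound \eqref{aprioriestimate} combined with the weighted Klainerman--Sobolev inequality applied to $\Lie_{Z^J}h$ for $|J|\leq 4$, yielding the crucial pointwise decay $|h|+(1+|q|)|\derm h|\les C(q_0)\,c(\de)c(\ga)\, E(4)\,\eps\,(1+t)^{-1+\mu}$ in the exterior with $\mu<0$. This decay is precisely what delivers the factor $\eps/(1+\tau)$ multiplying $|\derm \Phi_V|^2 w$ in the last error term.

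The main obstacle, and the reason this estimate is phrased for a single component $\Phi_V$ rather than for the whole tensor, is handling the connection terms that arise from $\derm_\mu \Phi_V = (\derm_\mu \Phi)(V) - \Phi(\derm_\mu V)$ when $V \in \{L, e_a, \underline L\}$ is not $\derm^{(\bm m)}$-parallel. The derivatives $\derm_\mu V$ of the null frame vectors have components decaying only like $(1+r)^{-1}$, which is critical but not integrable in time; one has to verify that the bad angular components of $\derm e_a$ contract \emph{only} against the good tangential $\rderm$ piece, so that the resulting term $(1+r)^{-1}|\Phi|\,|\rderm \Phi_V|$ is absorbed in the positive $|\rderm\Phi_V|^2\widehat w/(1+|q|)$ bulk term via Cauchy--Schwarz and a Hardy-type inequality in $q$. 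I expect this bookkeeping with the null frame connection coefficients, rather than the multiplier identity itself, to be the genuinely delicate step. The requirement $\mu<0$ in the hypothesis precisely reflects that the loss in $(1+t)$ coming from the deformation tensor of $\hat L$ under $\bm g$ must be integrable against the polynomial growth permitted by $\delta$.
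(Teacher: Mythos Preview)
The paper does not prove this lemma here; it is imported from the companion paper \cite{G5} (the sentence introducing the lemma says so explicitly), so there is no in-paper argument to compare against. Your outline is nonetheless the expected one and almost certainly matches what \cite{G5} does: the weighted multiplier identity with $\pa_t$ and the weight $w(q)$ produces the slice energies, the null boundary flux, and---through $w'(q)\sim \widehat w(q)/(1+|q|)$---the positive bulk control of $|\rderm\Phi_V|^2$; the source is split by Cauchy--Schwarz with the $(1+\tau)/\eps$ versus $\eps/(1+\tau)$ weights; and the errors from $H=g^{-1}-m^{-1}$ and from the $g$-deformation tensor of the multiplier are bounded by $C(q_0)\,c(\de)\,c(\ga)\,E(4)\,\eps(1+\tau)^{-1}|\derm\Phi_V|^2\, w$ via the bootstrap plus Klainerman--Sobolev.

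Two refinements. First, your treatment of the null-frame connection terms is on the right track but incomplete: the commutator between the scalar box on the function $\Phi(V)$ and the $V$-component of the tensorial box contains $2g^{\mu\nu}(\derm_\mu\Phi)(\derm_\nu V)$, and since only the angular derivatives $\derm_{e_b}e_a$ are nonzero (the paper stresses $\derm_r e_a=0$ in \eqref{specialfactaboutAeacomponentsthatexpolitsthefactthatitisnotonlyAtangentialbutspecialone}), this term is of order $r^{-1}|\rderm\Phi|$---the \emph{full} tangential gradient of $\Phi$, not just of $\Phi_V$. In the exterior $r\gtrsim 1+t$, so this is $\lesssim(1+t)^{-1}|\derm\Phi|$ and is harmless once you allow the full $|\derm\Phi|^2$ on the right (which is indeed what appears when the lemma is applied, cf.\ \eqref{ControlontheweightedLtwonormofdermAea}); but it cannot be absorbed into the single-component bulk term on the left as you suggest. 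Second, your reading of $\mu<0$ is off: it is not an improved decay exponent for $h$ (the bootstrap only gives $(1+t)^{-1+\de}$); in this family of papers $\mu$ parametrises the auxiliary interior weight $\widehat w(q)$ for $q<0$, and in the purely exterior statement it survives only through the dependence of constants.
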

This is unlike the case of the Einstein vacuum equations, where an energy estimate for all components of the metric, without establishing an energy estimate for each component, would be sufficient. In fact, this is a key feature in this paper, that we have to deal separately at several occasions with some tangential components of the Einstein-Yang-Mills potential, namely $A_{e_a}$\;, before dealing with all components of the potential $A$\;, unlike the components of the metric which could be all dealt with together simultaneously at these occasions, which is a key difference not only with the Einstein vacuum equations, but also with the Einstein-Maxwell equations, where such “troublesome” structure does not exist -- we shall explain this later.

Such an energy estimate (see Lemma  \ref{Theveryfinal }), would allow us to control a weighted $L^2$ norm of $|  \derm A_{e_a} |$ (see the following estimate \eqref{ControlontheweightedLtwonormofdermAea}), by space-time integrals that do \textit{not} involve the “bad” term $ A_{e_a}  \cdot    \derm A_{e_a}$\;, because this “bad” term does not appear in the source terms for the wave equation for the “good” terms  $A_{e_a}$\;, $a \in \{1, 2 \}$\;. Indeed, for $\ga \geq 3 \de $\,, for $0 < \de \leq \frac{1}{4}$\,, and for $\eps$ small, enough depending on $q_0$\;, on $\ga$\;, on $\de$\;, on $|I|$ and on $\mu$\;, we have
      \bea\label{ControlontheweightedLtwonormofdermAea}
   \notag
 &&     \int_{\Sigma^{ext}_{t_2} }  |\derm ( \Lie_{Z^I}  A_{e_{a}} )  |^2     \cdot w(q)  \cdot d^{3}x \\  
    \notag
 &\les&      \int_{\Sigma^{ext}_{t_1} }  |\derm  ( \Lie_{Z^I}  A_{e_{a}} )|^2     \cdot w(q)  \cdot d^{3}x \\
    \notag
&& +   \sum_{|K| \leq |I |}   \int_{t_1}^{t_2}  \Big[    \;   \int_{\Sigma^{ext}_{\tau} }    \Big[       O \big(   C(q_0)   \cdot  c (\delta) \cdot c (\gamma) \cdot C(|I|) \cdot E ( \lfloor \frac{|I|}{2} \rfloor + 5)  \cdot \frac{\eps   \cdot  | \derm ( \Lie_{Z^K} h^1 ) |^2  }{(1+\tau+|q|)}   \big) \\
   \notag
   && + O \big(    C(q_0)   \cdot  c (\delta) \cdot c (\gamma) \cdot C(|I|) \cdot E (   \lfloor \frac{|I|}{2} \rfloor +5)  \cdot \frac{\eps  \cdot  |\derm ( \Lie_{Z^K} A ) |^2  }{(1+\tau+|q|) }  \big)    \; \Big]    \cdot w(q)  \cdot d^{3}x \Big]   \cdot d\tau  \\
      \notag
        &&  +     \int_{t_1}^{t_2}  \int_{\Sigma^{ext}_{\tau} }   \Big[     C(q_0)   \cdot  c (\delta) \cdot c (\gamma) \cdot C(|I|) \cdot E (  \lfloor \frac{|I|}{2} \rfloor   +3)  \cdot \frac{\eps }{(1+\tau+|q|)^{1-   c (\gamma)  \cdot c (\delta)  \cdot c(|I|) \cdot E (\lfloor \frac{|I|}{2} \rfloor  + 2)\cdot  \eps } \cdot (1+|q|)^2} \\
           \notag
&& \times    \sum_{  |K| \leq |I| -1 }  | \derm ( \Lie_{Z^K}  A )  |^2    \Big] \cdot w(q) \cdot d^{3}x  \cdot d\tau \\
   \notag
&& +   \int_{t_1}^{t_2}   \int_{\Sigma^{ext}_{\tau} }   \sum_{  |K| \leq |I| }        \Big[   C(q_0)   \cdot C(|I|) \cdot E (  \lfloor \frac{|I|}{2} \rfloor  +3)  \cdot \frac{\eps \cdot  | \Lie_{Z^{K}} H_{L  L} |^2 }{(1+\tau+|q|)^{1- 2 \de } \cdot (1+|q|)^{4+2\gamma}}  \Big] \cdot w(q)  \cdot d^{3}x  \cdot d\tau \\
 && +   C(q_0)   \cdot  c (\delta) \cdot c (\gamma) \cdot C(|I|) \cdot E ( \lfloor \frac{|I|}{2} \rfloor + 5)  \cdot  \frac{\eps^3 }{(1+t+|q|)^{1 -   c (\gamma)  \cdot c (\delta)  \cdot c(|I|) \cdot E ( \lfloor \frac{|I|}{2} \rfloor  + 5) \cdot \eps } }  \; .
\eea
This in turn, using a weighted Klainerman-Soboloev estimate in the exterior region (exterior to an outgoing null cone for the Minkowski metric, defined to be the Minkowski metric in wave coordinates), would translate into pointwise decay on $|  \derm A_{e_a} |$ (see the following estimate \eqref{upgradedestimateontheA_acomponentsthatconservesthedecayasoneoverttimesintegralsoftheenergysoastousegeneralizedgronwall}). Indeed, for $\ga \geq 3 \de $\,, for $0 < \de \leq \frac{1}{4}$\,, and for $\eps$ small, enough depending on $q_0$\,, on $\ga$, on $\de$, on $|I|$ and on $\mu < 0$\;, we have 
      \bea\label{upgradedestimateontheA_acomponentsthatconservesthedecayasoneoverttimesintegralsoftheenergysoastousegeneralizedgronwall}
   \notag
 &&   |\derm ( \Lie_{Z^I}  A_{e_{a}} )  |  \\
    \notag
 &\les&   \frac{1}{(1+t+|q|) \cdot (1+|q|)^{1+\ga} } \cdot \Big[ \;  \sum_{|J|\leq |I| + 2 }   \int_{\Sigma^{ext}_{t_1} }  |\derm  ( \Lie_{Z^J}  A_{e_{a}} )|^2     \cdot w(q)  \cdot d^{3}x \\
    \notag
&& +   \sum_{|K| \leq |I |}   \int_{t_1}^{t}  \Big[    \;   \int_{\Sigma^{ext}_{\tau} }    \Big[       O \big(   C(q_0)   \cdot  c (\delta) \cdot c (\gamma) \cdot C(|I|) \cdot E ( \lfloor \frac{|I|}{2} \rfloor + 6)  \cdot \frac{\eps   \cdot  | \derm ( \Lie_{Z^K} h^1 ) |^2  }{(1+\tau+|q|)}   \big) \\
   \notag
   && + O \big(    C(q_0)   \cdot  c (\delta) \cdot c (\gamma) \cdot C(|I|) \cdot E (   \lfloor \frac{|I|}{2} \rfloor +6)  \cdot \frac{\eps  \cdot  |\derm ( \Lie_{Z^K} A ) |^2  }{(1+\tau+|q|) }  \big)    \; \Big]    \cdot w(q)  \cdot d^{3}x  \Big]   \cdot d\tau  \\
      \notag
        &&  +     \int_{t_1}^{t}  \int_{\Sigma^{ext}_{\tau} }   \Big[     C(q_0)   \cdot  c (\delta) \cdot c (\gamma) \cdot C(|I|) \cdot E ( \lfloor \frac{|I|}{2} \rfloor  +4)  \cdot \frac{\eps }{(1+\tau+|q|)^{1-   c (\gamma)  \cdot c (\delta)  \cdot c(|I|) \cdot E (  \lfloor \frac{|I|}{2} \rfloor  + 4)\cdot  \eps } \cdot (1+|q|)^2} \\
           \notag
&& \times    \sum_{  |K| \leq |I| -1 }  | \derm ( \Lie_{Z^K}  A )  |^2     \Big]  \cdot w(q)  \cdot d^{3}x  \cdot d\tau \\
   \notag
&& +   \int_{t_1}^{t}   \int_{\Sigma^{ext}_{\tau} }   \sum_{  |K| \leq |I| }       \Big[   C(q_0)   \cdot C(|I|) \cdot E (  \lfloor \frac{|I|}{2} \rfloor  +5)  \cdot \frac{\eps \cdot  | \Lie_{Z^{K}} H_{L  L} |^2 }{(1+\tau+|q|)^{1- 2 \de } \cdot (1+|q|)^{4+2\gamma}}  \Big] \cdot w(q)  \cdot d^{3}x  \cdot d\tau   \\
 && +   C(q_0)   \cdot  c (\delta) \cdot c (\gamma) \cdot C(|I|) \cdot E ( \lfloor \frac{|I|}{2} \rfloor + 5)  \cdot  \frac{\eps^3 }{(1+t+|q|)^{1 -   c (\gamma)  \cdot c (\delta)  \cdot c(|I|) \cdot E ( \lfloor \frac{|I|}{2} \rfloor  + 5) \cdot \eps } }  \; \Big]^{\frac{1}{2}}  \; ,
\eea
and 
      \beaa
   \notag
 &&   |\Lie_{Z^I}  A_{e_{a}}   |  \\
 &\les&   \frac{1}{(1+t+|q|) \cdot (1+|q|)^{\ga} } \cdot \Big[ \;   \sum_{|J|\leq |I| + 2 }  \int_{\Sigma^{ext}_{t_1} }  |\derm  ( \Lie_{Z^J}  A_{e_{a}} )|^2     \cdot w(q)  \cdot d^{3}x \\
&& +   \sum_{|K| \leq |I |}   \int_{t_1}^{t}  \Big[    \;   \int_{\Sigma^{ext}_{\tau} }    \Big[       O \big(   C(q_0)   \cdot  c (\delta) \cdot c (\gamma) \cdot C(|I|) \cdot E ( \lfloor \frac{|I|}{2} \rfloor + 6)  \cdot \frac{\eps   \cdot  | \derm ( \Lie_{Z^K} h^1 ) |^2  }{(1+\tau+|q|)}   \big) \\
   && + O \big(    C(q_0)   \cdot  c (\delta) \cdot c (\gamma) \cdot C(|I|) \cdot E (   \lfloor \frac{|I|}{2} \rfloor +6)  \cdot \frac{\eps  \cdot  |\derm ( \Lie_{Z^K} A ) |^2  }{(1+\tau+|q|) }  \big)    \; \Big]    \cdot w(q)  \cdot d^{3}x  \Big]   \cdot d\tau  \\
      \notag
        &&  +     \int_{t_1}^{t}  \int_{\Sigma^{ext}_{\tau} }   \Big[     C(q_0)   \cdot  c (\delta) \cdot c (\gamma) \cdot C(|I|) \cdot E (\lfloor \frac{|I|}{2} \rfloor   +4)  \cdot \frac{\eps }{(1+\tau+|q|)^{1-   c (\gamma)  \cdot c (\delta)  \cdot c(|I|) \cdot E (  \lfloor \frac{|I|}{2} \rfloor  + 4)\cdot  \eps } \cdot (1+|q|)^2} \\
&& \times    \sum_{  |K| \leq |I| -1 }  | \derm ( \Lie_{Z^K}  A )  |^2    \Big]  \cdot w(q)  \cdot d^{3}x  \cdot d\tau \\
&& +   \int_{t_1}^{t}   \int_{\Sigma^{ext}_{\tau} }   \sum_{  |K| \leq |I| }       \Big[   C(q_0)   \cdot C(|I|) \cdot E (  \lfloor \frac{|I|}{2} \rfloor  +5)  \cdot \frac{\eps \cdot  | \Lie_{Z^{K}} H_{L  L} |^2 }{(1+\tau+|q|)^{1- 2 \de } \cdot (1+|q|)^{4+2\gamma}}  \Big] \cdot w(q)  \cdot d^{3}x  \cdot d\tau \\
 && +   C(q_0)   \cdot  c (\delta) \cdot c (\gamma) \cdot C(|I|) \cdot E ( \lfloor \frac{|I|}{2} \rfloor + 5)  \cdot  \frac{\eps^3 }{(1+t+|q|)^{1 -   c (\gamma)  \cdot c (\delta)  \cdot c(|I|) \cdot E ( \lfloor \frac{|I|}{2} \rfloor  + 5) \cdot \eps } }   \; \Big]^{\frac{1}{2}}  \; .
\eeaa
Hence, we used the Klainerman-Sobolev in the exterior to estimate this special component $|\derm ( \Lie_{Z^I}  A_{e_{a}} ) | $. Now, by using the very special fact that it is an $A_{e_a}$ component, and not just any component, it has the special feature that a partial derivative in the direction of $r$, is also a covariant derivative (covariant with respect to the Minkowski metric $m$) in the direction of $r$ of that component, i.e. $\pa_r  \Lie_{Z^I} A_{e_{a}}  = \derm_r  ( \Lie_{Z^I} A_{e_{a}} )$ (see the following equations \eqref{specialfactaboutAeacomponentsthatexpolitsthefactthatitisnotonlyAtangentialbutspecialone} and \eqref{partialderivativeindirectionofrofAeacomponenentiscovariantderivativeofAeacomponent}). In fact, in order to estimate first $ | \pa_{r} A_{e_{a}}  |$, so that we could then integrate along $s= constant$ and $\Om = constant$, we will use the special fact that
      \bea\label{specialfactaboutAeacomponentsthatexpolitsthefactthatitisnotonlyAtangentialbutspecialone}
   &&   \derm_{r} e_a = 0 \; ,
      \eea
 and therefore
      \bea\label{partialderivativeindirectionofrofAeacomponenentiscovariantderivativeofAeacomponent}
      \notag
    \pa_r  \Lie_{Z^I} A_{e_{a}} &=& \derm_r  ( \Lie_{Z^I} A_{e_{a}} )  +  \Lie_{Z^I} A (\derm_{r} e_a)  \\
     &=& \derm_r  ( \Lie_{Z^I} A_{e_{a}} ) \; .
     \eea
This in turn would allow us, by special integration, as we detailed in \cite{G4}, to translate the pointwise estimate on $|  \derm A_{e_a} |$ into pointwise estimate on $| A_{e_a} |$ (see estimate \eqref{upgradedestimateontheA_acomponentsthatconservesthedecayasoneoverttimesintegralsoftheenergysoastousegeneralizedgronwall}).

This would enable us to control the term $$\frac{ (1+t)}{\eps} \cdot \Big( \sum_{|K| +|J| \leq |I|}   | \Lie_{Z^K}  A_{e_{a}}   | \cdot |\derm ( \Lie_{Z^J}  A_{e_{a}} )  | \Big)^{2}$$ with the right decay factors in time and space (see the following estimate \eqref{estimateonthebadtermproductA_atimesdermA_ausingbootstrapsoastoclosethegronwallinequalityonenergy}). Indeed, for $\ga \geq 3 \de $\,, for $0 < \de \leq \frac{1}{4}$\,, and for $\eps$ small, enough depending on $q_0$\,, on $\ga$, on $\de$,  on $|I|$ and on $\mu < 0$\;, we have
  \bea\label{estimateonthebadtermproductA_atimesdermA_ausingbootstrapsoastoclosethegronwallinequalityonenergy}
\notag
&&  \frac{ (1+t)}{\eps} \cdot \Big( \sum_{|K| +|J| \leq |I|}   | \Lie_{Z^K}  A_{e_{a}}   | \cdot |\derm ( \Lie_{Z^J}  A_{e_{a}} )  | \Big)^{2}  \\
\notag
  &\les&   C(q_0) \cdot c (\gamma) \cdot  C ( |I| ) \cdot E (  \lfloor \frac{|I|}{2} \rfloor  + 2) \\
  \notag
  && \times  \frac{\eps}{(1+t+|q|)^{3-2\de} \cdot (1+|q|)^{2+4\ga} } \cdot \Big[ \;  \sum_{|J|\leq |I| + 2 }   \int_{\Sigma^{ext}_{t_1} }  |\derm  ( \Lie_{Z^J}  A_{e_{a}} )|^2     \cdot w(q)  \cdot d^{3}x \\
  \notag
&& +   \sum_{|K| \leq |I |}   \int_{t_1}^{t}  \Big[    \;   \int_{\Sigma^{ext}_{\tau} }    \Big[       O \big(   C(q_0)   \cdot  c (\delta) \cdot c (\gamma) \cdot C(|I|) \cdot E ( \lfloor \frac{|I|}{2} \rfloor + 6)  \cdot \frac{\eps   \cdot  | \derm ( \Lie_{Z^K} h ) |^2  }{(1+\tau+|q|)}   \big) \\
\notag
   && + O \big(    C(q_0)   \cdot  c (\delta) \cdot c (\gamma) \cdot C(|I|) \cdot E (   \lfloor \frac{|I|}{2} \rfloor +6)  \cdot \frac{\eps  \cdot  |\derm ( \Lie_{Z^K} A ) |^2  }{(1+\tau+|q|) }  \big)    \; \Big]    \cdot w(q)  \cdot d^{3}x   \Big]  \cdot d\tau  \\
      \notag
        &&  +     \int_{t_1}^{t}  \int_{\Sigma^{ext}_{\tau} }   \Big[     C(q_0)   \cdot  c (\delta) \cdot c (\gamma) \cdot C(|I|) \cdot E (\lfloor \frac{|I|}{2} \rfloor   +4)  \cdot \frac{\eps }{(1+\tau+|q|)^{1-   c (\gamma)  \cdot c (\delta)  \cdot c(|I|) \cdot E (  \lfloor \frac{|I|}{2} \rfloor + 4)\cdot  \eps } \cdot (1+|q|)^2} \\
        \notag
&& \times    \sum_{  |K| \leq |I| -1 }  | \derm ( \Lie_{Z^K}  A )  |^2    \Big]  \cdot w(q)  \cdot d^{3}x  \cdot d\tau  \\
\notag
&& +   \int_{t_1}^{t}   \int_{\Sigma^{ext}_{\tau} }   \sum_{  |K| \leq |I| }        \Big[   C(q_0)   \cdot C(|I|) \cdot E (  \lfloor \frac{|I|}{2} \rfloor  +5)  \cdot \frac{\eps \cdot  | \Lie_{Z^{K}} H_{L  L} |^2 }{(1+\tau+|q|)^{1- 2 \de } \cdot (1+|q|)^{4+2\gamma}}  \Big] \cdot w(q)  \cdot d^{3}x  \cdot d\tau  \\
 && +   C(q_0)   \cdot  c (\delta) \cdot c (\gamma) \cdot C(|I|) \cdot E ( \lfloor \frac{|I|}{2} \rfloor + 5)  \cdot  \frac{\eps^3 }{(1+t+|q|)^{1 -   c (\gamma)  \cdot c (\delta)  \cdot c(|I|) \cdot E ( \lfloor \frac{|I|}{2} \rfloor  + 5) \cdot \eps } }  \; \Big]  \; .
\eea
This estimate is done so that the \textit{weighted} integral in space of such quantity, namely $$\int_{\Sigma^{ext}_{\tau} }  \frac{ (1+t)}{\eps} \cdot \Big( \sum_{|K| +|J| \leq |I|}   | \Lie_{Z^K}  A_{e_{a}}   | \cdot |\derm ( \Lie_{Z^J}  A_{e_{a}} )  | \Big)^{2}   \cdot w(q) \; ,$$ could be controlled (see the following estimate \eqref{HardyinequalityontheestimateonthebadtermproductA_atimesdermA_ausingbootstrapsoastoclosethegronwallinequalityonenergy}) by a quantity that it is a time integral multiplied with the right factor so that the time integral $$\int_{t_1}^{t_2} \int_{\Sigma^{ext}_{\tau} }  \frac{ (1+t)}{\eps} \cdot \Big( \sum_{|K| +|J| \leq |I|}   | \Lie_{Z^K}  A_{e_{a}}   | \cdot |\derm ( \Lie_{Z^J}  A_{e_{a}} )  | \Big)^{2}   \cdot w(q) \cdot dt $$ would be controlled by a time integral of a time integral, with all terms entering with the right factors so that they could be controlled only by one single time integral, that gives a suitable control to be able to apply later a Grönwall inequality on all components. For $\ga \geq 3 \de $\,, for $0 < \de \leq \frac{1}{4}$\,, and for $\eps$ small, enough depending on $q_0$\;, on $\ga$\;, on $\de$\;, on $|I|$ and on $\mu$\;, we have
 \bea\label{HardyinequalityontheestimateonthebadtermproductA_atimesdermA_ausingbootstrapsoastoclosethegronwallinequalityonenergy}
\notag
&&   \int_{\Sigma^{ext}_{\tau} }  \frac{ (1+t)}{\eps} \cdot \Big( \sum_{|K| +|J| \leq |I|}   | \Lie_{Z^K}  A_{e_{a}}   | \cdot |\derm ( \Lie_{Z^J}  A_{e_{a}} )  | \Big)^{2}   \cdot w(q)  \\
\notag
  &\les&   C(q_0) \cdot c (\gamma) \cdot  C ( |I| ) \cdot E (  \lfloor \frac{|I|}{2} \rfloor  + 2) \\
  \notag
  && \times  \frac{\eps}{(1+t)^{1+2\de}}  \cdot \Big[ \;  \sum_{|J|\leq |I| + 2 }   \int_{\Sigma^{ext}_{t_1} }  |\derm  ( \Lie_{Z^J}  A_{e_{a}} )|^2     \cdot w(q)  \cdot d^{3}x \\
  \notag
&& +   \sum_{|K| \leq |I |}   \int_{t_1}^{t}  \Big[    \;   \int_{\Sigma^{ext}_{\tau} }    \Big[       O \big(   C(q_0)   \cdot  c (\delta) \cdot c (\gamma) \cdot C(|I|) \cdot E ( \lfloor \frac{|I|}{2} \rfloor + 6)  \cdot \frac{\eps   \cdot  | \derm ( \Lie_{Z^K} h^1 ) |^2  }{(1+\tau+|q|)}   \big) \\
\notag
   && + O \big(    C(q_0)   \cdot  c (\delta) \cdot c (\gamma) \cdot C(|I|) \cdot E (   \lfloor \frac{|I|}{2} \rfloor +6)  \cdot \frac{\eps  \cdot  |\derm ( \Lie_{Z^K} A ) |^2  }{(1+\tau+|q|) }  \big)    \; \Big]    \cdot w(q)  \cdot d^{3}x   \Big]  \cdot d\tau  \\
      \notag
        &&  +     \int_{t_1}^{t}  \int_{\Sigma^{ext}_{\tau} }   \Big[     C(q_0)   \cdot  c (\delta) \cdot c (\gamma) \cdot C(|I|) \cdot E (\lfloor \frac{|I|}{2} \rfloor   +4)  \cdot \frac{\eps }{(1+\tau+|q|)^{1-   c (\gamma)  \cdot c (\delta)  \cdot c(|I|) \cdot E (  \lfloor \frac{|I|}{2} \rfloor + 4)\cdot  \eps } \cdot (1+|q|)^2} \\
        \notag
&& \times    \sum_{  |K| \leq |I| -1 }  | \derm ( \Lie_{Z^K}  A )  |^2     \Big]  \cdot w(q)  \cdot d^{3}x  \cdot d\tau  \\
\notag
&& +   \int_{t_1}^{t_2}   \int_{\Sigma^{ext}_{\tau} }   \sum_{  |K| \leq |I| }         \Big[   C(q_0)   \cdot C(|I|) \cdot E (  \lfloor \frac{|I|}{2} \rfloor  +5)  \cdot \frac{\eps \cdot  | \Lie_{Z^{K}} H_{L  L} |^2 }{(1+\tau+|q|)^{1- 2 \de } \cdot (1+|q|)^{4+2\gamma}}  \Big]  \cdot w(q)  \cdot d^{3}x  \cdot d\tau \\
 && +   C(q_0)   \cdot  c (\delta) \cdot c (\gamma) \cdot C(|I|) \cdot E ( \lfloor \frac{|I|}{2} \rfloor + 5)  \cdot  \frac{\eps^3 }{(1+t)^{1 -   c (\gamma)  \cdot c (\delta)  \cdot c(|I|) \cdot E ( \lfloor \frac{|I|}{2} \rfloor  + 5) \cdot \eps } }   \; \Big]     \; .
\eea

\subsection{Dealing with the “bad” term $ A_L   \cdot   \derm A $\;}\label{dealingwithALderA}\

It turns out that the Lorenz gauge condition, implies a very good estimate in the exterior on the zeroth Lie derivative of $A_L$ (see the following estimate \eqref{simpleestimateonlyonthe ALcomponenetwithOUTLiederivative}, or see also the following estimate \eqref{estimategoodcomponentspotentialandmetric}, where the sum over $|I|-1$ is absent if $|I| = 0$). Indeed, under the bootstrap assumption holding for all $|I| \leq 3 $, we have in the exterior region,
          \bea\label{simpleestimateonlyonthe ALcomponenetwithOUTLiederivative}
        \notag
 |  A_{L}  |   &\les&  C(q_0)   \cdot  c (\delta) \cdot c (\gamma)  \cdot E ( 3) \cdot \frac{ \eps   }{ (1+t+|q|)^{2-2\delta}  \cdot  (1+|q|)^{\ga - 1}  } \; .\\
       \eea
Let $M\leq \eps$\,. Under the bootstrap assumption holding for all $|J| \leq  \lfloor \frac{|I|}{2} \rfloor  $\,, we have
        \bea\label{estimategoodcomponentspotentialandmetric}
        \notag
 |   \Lie_{Z^I}  A_{L}  |  (t, | x | \cdot \Om)  &\leq& \int\limits_{s,\,\Om=const} \sum_{|J|\leq |I| -1} |   \derm (  \Lie_{Z^J} A )  | \\
 \notag
     && + \begin{cases} c (\delta) \cdot c (\gamma) \cdot C ( |I| ) \cdot E ( |I| + 3)  \cdot \big( \frac{ \eps   }{ (1+t+|q|)^{2-2\delta} \cdot  (1+|q|)^{\ga - 1} } \big),\quad\text{when }\quad q>0 \; ,\\
       C ( |I| ) \cdot E ( |I| + 3)  \cdot \big( \frac{ \eps \cdot (1+|q|)^{\frac{3}{2} }   }{ (1+t+|q|)^{2-2\delta} } \big) \,\quad\text{when }\quad q<0 \; , \end{cases} \\
       \eea
        where $\int\limits_{s,\, \Om=const}  |   \derm  ( \Lie_{Z^J} A )  |   $ is the integral defined as in the following Definition \ref{definitionforintegralalongthenullcoordinateplusboundaryterm}.   
\begin{definition}\label{definitionforintegralalongthenullcoordinateplusboundaryterm}
For a function $f$, we define $\int\limits_{s,\, \Om=const} f  $ as the integral at a fixed $\Om \in \SSS^2$, from $(t, | x | \cdot \Om)$ along the line $(\tau, r \cdot \Om)$ such that $r+\tau =  | x | +t $ (i.e. along a fixed null coordinate $s:=\tau+r $) till we reach the hyperplane $\tau=0$, to which we also add the generated boundary term at the hyperplane prescribed by $\tau=0$. In other words,
\bea\label{definitionequationforintegralalongthenullcoordinateplusboundaryterm}
\notag
\int\limits_{s,\, \Om=const} |f |  (t, | x | \cdot \Om)  &=&   \int_{t+| x | }^{| x |  } \pa_r | f (t + | x | - r,  r  \cdot \Om ) | dr +  | f \big(0, ( t + | x |) \cdot \Om \big) |  \; .\\
\eea
\end{definition}
Also, under the bootstrap assumption holding for all $|J| \leq  \lfloor \frac{|I|}{2} \rfloor  $,
        \beaa
        \notag
|  \Lie_{Z^I} h_{\cal T L} |  + |  \Lie_{Z^I} H_{\cal T L} |   &\les&  \int\limits_{s,\,\Om=const} \sum_{|J|\leq |I| -1} \big( |\derm  ( \Lie_{Z^J} h) | + |\derm  ( \Lie_{Z^J} H )  | \big) \\
&& + \begin{cases} c (\delta) \cdot c (\gamma) \cdot C ( |I| ) \cdot E ( |I| + 3)  \cdot \frac{ \eps   }{ (1+t+|q|) } ,\quad\text{when }\quad q>0 \; ,\\
        C ( |I| ) \cdot E ( |I| + 3) \cdot  \frac{ \eps \cdot (1+|q|)^{\frac{1}{2} + 2 \delta } }{ (1+t+|q|) }  \,\quad\text{when }\quad q<0 \; . \end{cases} 
       \eeaa
   and
           \beaa
        \notag
|  \Lie_{Z^I} h_{L L} |  + |  \Lie_{Z^I} H_{ L L} |   &\les&  \int\limits_{s,\,\Om=const} \sum_{|J|\leq |I| -2} \big( |\derm  ( \Lie_{Z^J} h ) | + |\derm  ( \Lie_{Z^J} H ) | \big) \\
&& + \begin{cases} c (\delta) \cdot c (\gamma) \cdot C ( |I| ) \cdot E ( |I| + 3)  \cdot \frac{ \eps   }{ (1+t+|q|) } ,\quad\text{when }\quad q>0 \; ,\\
        C ( |I| ) \cdot E ( |I| + 3) \cdot  \frac{ \eps \cdot (1+|q|)^{\frac{1}{2} + 2 \delta } }{ (1+t+|q|) }  \,\quad\text{when }\quad q<0 \; . \end{cases} 
       \eeaa    
As a result, more precisely, in $\overline{C} \subset \{ q \geq q_0 \} $, we have 
                    \beaa
        \notag
|  \Lie_{Z^I} h_{\cal T L} |    + |  \Lie_{Z^I} H_{\cal T L} |                    &\les&   \int\limits_{s,\,\Om=const} \sum_{|J|\leq |I| -1} \big( |\derm  ( \Lie_{Z^J} h) | + |\derm (  \Lie_{Z^J} H  ) | \big) \\
\notag
&& + C(q_0) \cdot   c (\delta) \cdot c (\gamma) \cdot C ( |I| ) \cdot E ( |I| + 3)  \cdot \frac{ \eps   }{ (1+t+|q|) } \; , 
       \eeaa
and
                \beaa
        \notag
|  \Lie_{Z^I} h_{L L} |    + |  \Lie_{Z^I} H_{L L} |                    &\les&   \int\limits_{s,\,\Om=const} \sum_{|J|\leq |I| -2} \big( |\derm  ( \Lie_{Z^J} h) | + |\derm  ( \Lie_{Z^J} H )  | \big) \\
\notag
&& + C(q_0) \cdot   c (\delta) \cdot c (\gamma) \cdot C ( |I| ) \cdot E ( |I| + 3)  \cdot \frac{ \eps   }{ (1+t+|q|) } \; . 
       \eeaa 
We also have a very good estimate on the partial derivative of that component $\pa A_L$ (thanks to the following estimate \eqref{estimateonpartialderivativeofALcomponent}),  which in an estimate also needed to control successfully the term $ | A_L  | \cdot    | \derm A   | $\;. Indeed, under the bootstrap assumption \eqref{aprioriestimate} holding for all $|J| \leq  \lfloor \frac{|I|}{2} \rfloor  $\;, we have the following estimate for the Einstein-Yang-Mills potential, 
\bea\label{estimateonpartialderivativeofALcomponent}
\notag
  && | \pa  \Lie_{Z^I}  A_{L}  |  \\
  \notag
  &\les& E(  \lfloor \frac{|I|}{2} \rfloor   )  \cdot \Big(  \sum_{|J|\leq |I|} |   \rderm  \Lie_{Z^J} A | + \sum_{|J|\leq |I| -1} |   \derm  \Lie_{Z^J} A |  + \sum_{|K| + |M| \leq |I|} O \big(  | ( \Lie_{Z^K}  h )  | \cdot  | \derm  (  \Lie_{Z^M}  A ) | \big) \Big)  \; ,\\
\eea
where 
\beaa
| \pa  \Lie_{Z^I}  A_{L} |^2 = | \pa_t ( \Lie_{Z^I} A )_{L} |^2 + \sum_{i=1}^3 | \pa_i ( \Lie_{Z^I} A)_{L} |^2 \;. 
\eeaa
Also, under the bootstrap assumption \eqref{aprioriestimate} holding for all $|J| \leq  \lfloor \frac{|I|}{2} \rfloor  $\;, we have the following estimate for the Einstein-Yang-Mills metric, 
\bea\label{partialderivativeofLiederivativeofHTangentialLcompo}
  \notag
  && | \pa  \Lie_{Z^I}  H_{\cal T L}  | \\
    \notag
   &\les& E(  \lfloor \frac{|I|}{2} \rfloor   )  \cdot \Big(   \sum_{|J|\leq |I|} |   \rderm  \Lie_{Z^J} H | + \sum_{|J|\leq |I| -1} |   \derm  \Lie_{Z^J} H |  + \sum_{|K|+ |M| \leq |I|}  O (|\Lie_{Z^K} H| \cdot |\derm ( \Lie_{Z^M} H )| )   \Big)  \; ,\\
\eea
\bea\label{partialderivativeofLiederivativeofHLLcomp}
  \notag
  && | \pa  \Lie_{Z^I}  H_{ L L}  |  \\
    \notag
    &\les& E(  \lfloor \frac{|I|}{2} \rfloor   )  \cdot \Big(  \sum_{|J|\leq |I|} |   \rderm  \Lie_{Z^J} H | + \sum_{|J|\leq |I| -2} |   \derm  \Lie_{Z^J} H |  + \sum_{|K|+ |M| \leq |I|}  O (|\Lie_{Z^K} H| \cdot |\derm ( \Lie_{Z^M} H )| )   \Big)  \; ,\\
\eea
and 
\bea\label{partialderivativeofLiederivativeofsmallhTangentialLcompo}
  \notag
  && | \pa  \Lie_{Z^I}  h_{\cal T L}  | \\
    \notag
   &\les& E(  \lfloor \frac{|I|}{2} \rfloor   )  \cdot \Big(  \sum_{|J|\leq |I|} |   \rderm  \Lie_{Z^J} h | + \sum_{|J|\leq |I| -1} |   \derm  \Lie_{Z^J} h |  + \sum_{|K|+ |M| \leq |I|}  O (|\Lie_{Z^K} h| \cdot |\derm ( \Lie_{Z^M} h )| )   \Big)  \; ,\\
\eea
\bea\label{partialderivativeofLiederivativeofsmallhLLcomp}
  \notag
  && | \pa  \Lie_{Z^I}  h_{ L L}  |  \\
  \notag
  &\les& E(  \lfloor \frac{|I|}{2} \rfloor   )  \cdot \Big(  \sum_{|J|\leq |I|} |   \rderm  \Lie_{Z^J} h | + \sum_{|J|\leq |I| -2} |   \derm  \Lie_{Z^J} h |  + \sum_{|K|+ |M| \leq |I|}  O (|\Lie_{Z^K} h| \cdot |\derm ( \Lie_{Z^M} h )| )   \Big)  \; ,\\
\eea
where 
\beaa
| \pa  \Lie_{Z^I}  H_{\cal T L}  |^2 = | \pa_t ( \Lie_{Z^I} H  )_{\cal T L} |^2 + \sum_{i=1}^3 | \pa_i ( \Lie_{Z^I} H  )_{\cal T L} |^2 \;. 
\eeaa
In these estimates the partial differentiation actually concerns a partial derivative of the stated components of the Lie derivatives of the tensor (as opposed to the covariant derivative), and where the $\sum_{|J|\leq  -1}$ is understood to be a vanishing sum. In fact, the leading term of Lie derivatives in the direction of Minkowski vector fields of this product, namely $$ \sum_{|K| = |I| }  \Big(  |  A_L |  \cdot     |  \derm ( \Lie_{Z^K} A )  |  +  |  \Lie_{Z^K} A_L |  \cdot     |  \derm A   |   \Big) \;,$$ contains on one hand, a term  $ \sum_{|K| = |I| }  |  \Lie_{Z^K} A_L | $ that enters with a factor that seems to be the “wrong” factor (see the following Corollary \ref{TheleeadingtermfortroublesomecomponentsintehsourcesforA}), and on the other hand, a term $ \sum_{|K| = |I| } |  \derm ( \Lie_{Z^K} A )  | $ that enters with the right factor in order to apply a Grönwall inequality on the energy. 
\begin{corollary} \label{TheleeadingtermfortroublesomecomponentsintehsourcesforA}
We have for $\ga \geq 3 \de $\,, and $0 < \de \leq \frac{1}{4}$,
\beaa
&& \sum_{|K| = |I| }  \Big(  |  A_L |  \cdot     |  \derm ( \Lie_{Z^K} A )  |  +  |  \Lie_{Z^K} A_L |  \cdot     |  \derm A   |   \Big) \\
&\les& \sum_{|K| = |I| }  \Big( C(q_0)   \cdot  c (\delta) \cdot c (\gamma)  \cdot E ( 3) \cdot \frac{ \eps  \cdot     |  \derm ( \Lie_{Z^K} A )  |   }{ (1+t+|q|)^{2-2\delta}  \cdot  (1+|q|)^{\ga - 1}  } \\
&&  +    C(q_0)   \cdot  c (\delta) \cdot c (\gamma)  \cdot E ( 4)  \cdot \frac{\eps \cdot   |  \Lie_{Z^K} A_L |   }{(1+t+|q|)^{1-      c (\gamma)  \cdot c (\delta)   \cdot E ( 4) \cdot \eps } \cdot (1+|q|)^{1+\gamma - 2\de }}     \Big) 
\eeaa
\end{corollary}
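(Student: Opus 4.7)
The statement is a direct consequence of combining two independent pointwise decay estimates, one for each summand on the left-hand side, and then applying them factor-by-factor before summing in $|K|=|I|$.

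For the first summand $|A_L|\cdot|\derm(\Lie_{Z^K}A)|$, I would directly insert the sharp pointwise bound \eqref{simpleestimateonlyonthe ALcomponenetwithOUTLiederivative} on the zeroth-order quantity $|A_L|$, leaving $|\derm(\Lie_{Z^K}A)|$ untouched. This is possible precisely because the Lorenz gauge supplies the privileged $(1+t+|q|)^{-(2-2\de)}(1+|q|)^{-(\ga-1)}$ decay for $A_L$ (without any Lie derivative), and the substitution reproduces verbatim the first term on the right-hand side after summation. The factor $E(3)$ on the right-hand side matches the $E(3)$ appearing in \eqref{simpleestimateonlyonthe ALcomponenetwithOUTLiederivative}.

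For the second summand $|\Lie_{Z^K}A_L|\cdot|\derm A|$, I would keep $|\Lie_{Z^K}A_L|$ intact and derive a pointwise estimate for the zeroth-order quantity $|\derm A|$. This is obtained by applying the weighted Klainerman--Sobolev inequality in the exterior region to $\derm A$ with the weight $w(q)$, and controlling the resulting weighted $L^2$ norm $\sum_{|J|\leq 2}\|w^{1/2}\,\derm(\Lie_{Z^J}A)\|_{L^2(\Sigma_t^{\text{ext}})}$ by the bootstrap assumption \eqref{aprioriestimate} at level $E(4)$. The $w(q)^{1/2}$ gain in the region $\{q>0\}$ contributes the $(1+|q|)^{\frac{1}{2}+\ga}$ factor, the bootstrap supplies an $(1+t)^{\de}$ growth which gets absorbed into the exponent as $(1+t+|q|)^{-(1-c(\ga)c(\de)E(4)\eps)}$, and the standard Klainerman--Sobolev dividend is $(1+t+|q|)^{-1}(1+|q|)^{-\frac{1}{2}}$, together producing
\[
|\derm A|(t,x)\les C(q_0)\cdot c(\de)\cdot c(\ga)\cdot E(4)\cdot \frac{\eps}{(1+t+|q|)^{1-c(\ga)c(\de)E(4)\eps}(1+|q|)^{1+\ga-2\de}}.
\]
Multiplying by $|\Lie_{Z^K}A_L|$ and summing in $|K|=|I|$ gives the second term on the right-hand side.

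The main obstacle I expect is a bookkeeping one rather than a conceptual one: verifying that the $(1+|q|)$-decay supplied by the weight $w$ on $\Sigma^{\text{ext}}_t$, the $(1+t)^\de$ growth from the bootstrap, and the Klainerman--Sobolev gain combine to produce exactly the stated exponents $(1+|q|)^{-(1+\ga-2\de)}$ and $(1+t+|q|)^{-(1-c\eps)}$. This requires trading $2\de$ of the $|q|$-decay for part of the $t$-growth, which is the origin of the $-2\de$ correction and is legitimate only under the hypotheses $\ga\geq 3\de$ and $0<\de\leq\frac14$; the slack $\ga-2\de\geq\de>0$ is precisely what keeps the $(1+|q|)$-weight integrable in the subsequent Grönwall step. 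Once these exponent checks are done, adding the two factor-by-factor bounds yields the claimed inequality.
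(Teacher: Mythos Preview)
Your treatment of the first summand is correct and matches the paper: inserting \eqref{simpleestimateonlyonthe ALcomponenetwithOUTLiederivative} for $|A_L|$ reproduces the first term on the right-hand side verbatim.

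The gap is in the second summand. The direct Klainerman--Sobolev route you propose, combined with the bootstrap \eqref{aprioriestimate}, yields
\[
|\derm A|(t,x)\;\les\;\frac{E\cdot\eps}{(1+t+|q|)^{1-\de}\,(1+|q|)^{1+\ga}}\,,
\]
i.e.\ a $\de$-loss in the $(1+t+|q|)$-exponent, not the $c(\ga)c(\de)E(4)\eps$-loss stated in the corollary. Your sentence ``the bootstrap supplies an $(1+t)^{\de}$ growth which gets absorbed into the exponent as $(1+t+|q|)^{-(1-c(\ga)c(\de)E(4)\eps)}$'' is the step that fails: $\de$ and $\eps$ are independent small parameters, and $(1+t)^{\de}$ cannot be absorbed as $(1+t+|q|)^{c\eps}$. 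The trading you describe---sacrificing $2\de$ of $|q|$-decay to improve the $t$-decay---goes in the wrong direction, since $(1+|q|)\leq(1+t+|q|)$; concretely, near the light cone ($|q|$ bounded, $t\to\infty$) your bound behaves like $t^{-(1-\de)}$ while the corollary claims $t^{-(1-c\eps)}$, and these are genuinely incomparable.

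What the paper actually uses for the factor $|\derm A|$ is the \emph{upgraded} pointwise estimate \eqref{SteptwoforinductonforA} specialized to $|J|=0$, namely
\[
|\derm A|\;\les\;C(q_0)\cdot c(\de)\cdot c(\ga)\cdot E(4)\cdot\frac{\eps}{(1+t)^{1-c(\ga)c(\de)E(4)\eps}\,(1+|q|)^{1+\ga-2\de}}\,,
\]
which is obtained not from Klainerman--Sobolev plus bootstrap, but from the Gr\"onwall-type argument of Subsection~\ref{dealingwithLiederivativesupgrade} (see \eqref{GronwallinequalitongradientofAandh1withliederivativesofsourceterms} and \eqref{SteptwoforinductonforA}). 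That argument integrates the wave equation along characteristics with a coefficient $\eps/(1+\tau)$ in front of the Gr\"onwall term, producing only an $\eps$-power growth; this is precisely what gives the $c\eps$-loss (and the $2\de$ loss in $|q|$ comes from choosing $\varpi$ with $\ga'=\ga-2\de$). Note also that the constant $E(4)=E(|J|+4)\big|_{|J|=0}$ in the corollary matches \eqref{SteptwoforinductonforA}, whereas direct Klainerman--Sobolev would produce $E(2)$; this is another signal that the upgraded estimate, not the raw bootstrap, is being invoked.
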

However, applying a Hardy type inequality in the exterior (see the following Corollary \ref{HardytypeinequalityforintegralstartingatROm} of a Hardy type inequality), we can estimate the term with the “wrong” factor, namely $$ \sum_{|K| = |I| }  |  \Lie_{Z^K} A_L | \; , $$ by its partial derivative instead, namely $| \pa  \Lie_{Z^K} A_L  | $\;.
\begin{corollary}\label{HardytypeinequalityforintegralstartingatROm}
Let $w$ defined as in Definition \ref{defoftheweightw}, where $\ga > 0$.
Let  $\Phi$ a tensor that decays fast enough at spatial infinity for all time $t$\,, such that
\bea
 \int_{\SSS^{2}} \lim_{r \to \infty} \Big( \frac{r^{2}}{(1+t+r)^{a} \cdot (1+|q|) } \cdot w(q) \cdot | \Phi_{V} |^2   \Big)  d\si^{2} (t ) &=& 0 \; .
\eea
Let $R(\Om)  \geq 0 $\,, be a function of $\Om \in \SSS^{n-1}$\,. Then, since $\ga \neq 0$\,, we have for $0 \leq a \leq 2$\,, that 
\bea
\notag
 &&   \int_{\SSS^{2}} \int_{r=R(\Om)}^{r=\infty} \frac{r^{2}}{(1+t+r)^{a}} \cdot   \frac{w (q)}{(1+|q|)^2} \cdot  |\Phi_{V} |^2   \cdot dr  \cdot d\si^{2}  \\
 \notag
 &\leq& c(\ga) \cdot  \int_{\SSS^{n-1}} \int_{r=R(\Om)}^{r=\infty}  \frac{ r^{2}}{(1+t+r)^{a}} \cdot w(q) \cdot  | \pa_r\Phi_{V} |^2  \cdot  dr  \cdot d\si^{2}  \; , \\
 \eea
 where the constant $c(\ga)$ does not depend on $R(\Om)$\,. In particular, we have the following estimate in the exterior,
 \bea
\notag
  \int_{\Sigma^{ext}_{\tau} }  \frac{1}{(1+t+r)^{a}} \cdot   \frac{w (q)}{(1+|q|)^2} \cdot  |\Phi_{V} |^2 &\leq& c(\ga) \cdot  \int_{\Sigma^{ext}_{\tau} }   \frac{ 1 }{(1+t+r)^{a}} \cdot w(q) \cdot  | \pa_r\Phi_{V} |^2     \; , \\
 \eea
  where $\int_{\Sigma^{ext}_{\tau} } $ is taken with respect to the measure $r^2 \cdot dr  \cdot d\si^{2} $\,. And, in particular, if 
 \beaa
 \int_{\SSS^{2}} \lim_{r \to \infty} \Big( \frac{r^{2}}{(1+t+r)^{a} \cdot (1+|q|) } \cdot w(q) \cdot | \Phi |^2   \Big)  d\si^{2} (t ) &=& 0 \; .
\eeaa
then,
  \bea
\notag
 \int_{\Sigma^{ext}_{\tau} }  \frac{1}{(1+t+r)^{a}} \cdot   \frac{w (q)}{(1+|q|)^2} \cdot  |\Phi |^2  &\leq& c(\ga) \cdot  \int_{\Sigma^{ext}_{\tau} }   \frac{ 1 }{(1+t+r)^{a}} \cdot w(q) \cdot  | \derm \Phi |^2    \; . \\
 \eea
 \end{corollary}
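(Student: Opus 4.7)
The plan is to reduce the claimed inequality to a one-dimensional weighted Hardy inequality in $r \in [R(\Omega), \infty)$ at fixed $\Omega \in \SSS^2$ and fixed $t$, and then integrate over $\Omega \in \SSS^2$. Setting $q = r-t$ and $\psi(r) := r^2 / (1+t+r)^a$, I first note that
\[
\psi'(r) = \frac{r\,[\,2(1+t) + (2-a)\, r\,]}{(1+t+r)^{a+1}} \geq 0 \quad \text{for } 0 \leq a \leq 2,
\]
which will give the correct sign to one of the terms appearing below. The key auxiliary function, made possible by the hypothesis $\gamma \neq 0$, is the continuous positive antiderivative
\[
\tilde F(q) := \begin{cases} 1 + \dfrac{(1+q)^{2\gamma} - 1}{2\gamma}, & q \geq 0, \\[2pt] \dfrac{1}{1-q}, & q < 0, \end{cases}
\]
which takes value $1$ at $q=0$ with matching right- and left-derivatives equal to $1$, so that both $\tilde F$ and $d\tilde F/dr$ are continuous across the light cone $\{q = 0\}$, and which satisfies $d\tilde F / dr = w(q)/(1+|q|)^2$ on each half-line.

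With this antiderivative I would write
\[
\psi(r) \cdot \frac{w(q)}{(1+|q|)^2} = \bigl(\psi \tilde F\bigr)'(r) - \psi'(r)\,\tilde F(q),
\]
and integrate by parts in the $(\psi \tilde F)'$ piece on $[R(\Omega), \infty)$. This generates four terms: (i) the boundary contribution at $r = R(\Omega)$, which equals $-\psi(R(\Omega))\,\tilde F(q(R(\Omega)))\,|\Phi_V|^2 \leq 0$ and can be dropped; (ii) the boundary contribution at $r = \infty$, which after integration over $\SSS^2$ reproduces, up to the multiplicative factor $1/(2\gamma)$ coming from $\tilde F(q) \sim (1+q)^{2\gamma}/(2\gamma)$ as $q \to \infty$, exactly the quantity in the stated hypothesis and therefore vanishes; (iii) the integral $-\int \psi' \tilde F |\Phi_V|^2 \, dr$, which has the favorable sign $\leq 0$ because $\psi' \geq 0$ and $\tilde F \geq 0$; and (iv) the cross term $-2 \int \psi \tilde F\, \Phi_V\, \partial_r \Phi_V \, dr$, which is the only quantity left to estimate.

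For (iv) I plan to apply Cauchy-Schwarz followed by Young's inequality with the weight split $\psi \tilde F = \sqrt{\psi \cdot w(q)/(1+|q|)^2} \cdot \sqrt{\psi \cdot \tilde F(q)^2 (1+|q|)^2 / w(q)}$, choosing the split so that half of the left-hand side is reabsorbed and a constant multiple of $\int \psi \cdot \tilde F^2 (1+|q|)^2 / w \cdot |\partial_r \Phi_V|^2 \, dr$ is left on the right. The only place where $\gamma$ enters quantitatively is the pointwise estimate $\tilde F(q)^2 (1+|q|)^2 / w(q) \les c(\gamma) \cdot w(q)$: for $q < 0$ this is the identity $1 = w(q)$, while for $q \geq 0$ it reduces to the boundedness of $\tilde F(q)/(1+q)^{2\gamma}$ on $[0,\infty)$, which ranges from $1$ at $q=0$ to the limiting value $1/(2\gamma)$ as $q \to \infty$. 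The resulting constant $c(\gamma)$ is manifestly independent of both $R(\Omega)$ and $t$, as the statement demands. Integration over $\SSS^2$ then yields the first claimed inequality; the specialization to $\Sigma_\tau^{\ext}$ comes from choosing $R(\Omega)$ to be the radial profile describing the exterior region; and the final assertion on a general tensor $\Phi$ is immediate because $|\partial_r \Phi| \leq |\derm \Phi|$ componentwise in wave coordinates.

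I expect the main care to lie in the boundary analysis at $r = \infty$: the precise weights in the hypothesis $\int_{\SSS^2} \lim_{r \to \infty}(\cdots)\, d\sigma^2 = 0$ are exactly what is needed to kill the $\psi \tilde F |\Phi_V|^2$ boundary contribution after integration in $\Omega$, so this is a matter of matching asymptotics rather than of conceptual difficulty. A more subtle point, arranged from the start by the choice of $\tilde F$, is the need to avoid a hidden jump contribution at the light cone $q = 0$; the continuity of $\tilde F$ across $q=0$ is engineered precisely to remove this. Finally, the exclusion of $\gamma = 0$ is essential, since there the antiderivative becomes logarithmic and the ratio $\tilde F(q)^2(1+|q|)^2 / w(q)$ ceases to be bounded by a multiple of $w(q)$, so no Hardy constant $c(\gamma)$ would exist.
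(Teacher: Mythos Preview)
The paper states this corollary without proof, labelling it a corollary ``of a Hardy type inequality'' presumably established in one of the cited companion papers or in Lindblad--Rodnianski. Your proposal supplies a correct, self-contained argument via the standard integration-by-parts route for weighted Hardy inequalities: construct a $C^1$ antiderivative $\tilde F$ of $w(q)/(1+|q|)^2$, write the left-hand integrand as $(\psi\tilde F)' - \psi'\tilde F$, integrate by parts, discard the sign-favourable contributions (the boundary term at $R(\Omega)$ and the $-\int \psi'\tilde F\,|\Phi_V|^2$ term, using $\psi' \geq 0$ for $0 \leq a \leq 2$), match the boundary term at $r=\infty$ to the stated decay hypothesis, and close the cross term via Young's inequality together with the pointwise bound $\tilde F(q)^2(1+|q|)^2/w(q) \leq c(\gamma)\, w(q)$. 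Your care in matching $\tilde F$ across $q=0$ to avoid a jump contribution, and your identification of why $\gamma = 0$ must be excluded (the antiderivative becomes logarithmic and the last pointwise bound fails), are both accurate.
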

In the above estimate, we have again a seemingly “wrong” factor, but thanks to estimate \eqref{estimateonpartialderivativeofALcomponent}, this partial derivative can then be estimated by tangential derivatives of the potential $|\rderm A|$\;, with some good error terms, for which we have a very good control on (in the following estimate \eqref{estimateonthetermthatcontainspartialderivatoveofALinthettimessquareofbadcomponenentsinsourcetermsforAthatcontainstheALcomponent}), despite the “wrong” decaying factor for $|\rderm A|$\;, and this is thanks to our energy estimate (see Lemma \ref{Theveryfinal }) that controls a space-time integral for such tangential derivatives. For $\ga \geq 3\de$, and for $\eps$ small depending on $|I|$\;, on $\ga$ and on $\de$\;, we have
\bea\label{estimateonthetermthatcontainspartialderivatoveofALinthettimessquareofbadcomponenentsinsourcetermsforAthatcontainstheALcomponent}
\notag
 && \int_{\Sigma^{ext}_{\tau} } C(q_0)   \cdot  c (\delta) \cdot c (\gamma)  \cdot E ( 4)   \cdot \sum_{|K| = |I| }  \Big(    \frac{\eps \cdot   | \pa \Lie_{Z^K} A_L |^2   }{(1+t+|q|)^{1-      c (\gamma)  \cdot c (\delta)   \cdot E ( 4) \cdot \eps } \cdot (1+|q|)^{2\gamma - 4\de }}       \Big)  \cdot w(q)   \\
 \notag
 &\les & \int_{\Sigma^{ext}_{\tau} }C(q_0)   \cdot  c (\delta) \cdot c (\gamma)    \cdot  E (\lfloor \frac{|I|}{2} \rfloor +4  ) \cdot  \\
 \notag
 && \times   \Big[  \sum_{|K| \leq |I| }     \frac{\eps \cdot   | \rderm ( \Lie_{Z^K} A) |^2   }{(1+t+|q|)^{1-      c (\gamma)  \cdot c (\delta)   \cdot E ( 4) \cdot \eps } \cdot (1+|q|)^{2\gamma - 4\de }}         \\
 \notag
 &&+  \sum_{|K| \leq |I|-1 }     \frac{\eps \cdot   | \derm \Lie_{Z^K} A |^2   }{(1+t+|q|)^{1-      c (\gamma)  \cdot c (\delta)   \cdot E ( 4) \cdot \eps } \cdot (1+|q|)^{2\gamma - 4\de }}       \Big]  \cdot w(q)   \\
 \notag
&&+   \int_{\Sigma^{ext}_{\tau} } C(q_0)   \cdot  c (\delta) \cdot c (\gamma) \cdot C(|I|) \cdot E (  \lfloor \frac{|I|}{2} \rfloor  +4)  \\
\notag
&&  \times  \frac{\eps }{(1+t+|q|)^{2 } }  \cdot  \sum_{|K| \leq |I|}  \Big[  | \derm  (  \Lie_{Z^K}  A ) |^2 +  |   \derm ( \Lie_{Z^K} h^1 )   |^2  \Big]  \cdot w(q) \\
&&+   C(q_0)   \cdot  c (\delta) \cdot c (\gamma) \cdot C(|I|) \cdot E (  \lfloor \frac{|I|}{2} \rfloor  +4)  \cdot  \frac{\eps^3 }{(1+t)^{1-     c (\gamma)  \cdot c (\delta)  \cdot c(|I|) \cdot E ( \lfloor \frac{|I|}{2} \rfloor+ 4) \cdot \eps } }      \; .
\eea
The lower order terms, lower in number of Lie derivatives, namely $$\sum_{|J| + |K| \leq |I| -1}    | \Lie_{Z^J}  A_L |  \cdot     |  \derm ( \Lie_{Z^K} A )  | \;, $$ have a factor for $  |  \derm ( \Lie_{Z^K} A )  |$ which this time involves the Lie derivatives of that special component, namely $|\Lie_{Z^K}  A_L |$\;, and not the “nice” zeroth Lie derivative as opposed to what we explained above. To control the term $  |  \derm ( \Lie_{Z^K} A )  |$ that enters with the “wrong” factor, we use the fact that we have only an $\eps$ loss in the time decay rate (see the following estimate \eqref{estimateonlowerordertermsforthepotentialAinthesourcetermsforenergy}) thanks to our upgrading of the dispersive estimates for the Lie derivatives of the fields (see Subsection \ref{dealingwithLiederivativesupgrade}, that also presents new challenges that we shall explain later). Indeed, we have for $\ga \geq 3 \de $\,, and $0 < \de \leq \frac{1}{4}$\,,
               \bea\label{estimateonlowerordertermsforthepotentialAinthesourcetermsforenergy}
               \notag
&& \sum_{|J| + |K| \leq |I| -1}    | \Lie_{Z^J}  A_L |  \cdot     |  \derm ( \Lie_{Z^K} A )  |     \\
               \notag
&\les& C(q_0)   \cdot  c (\delta) \cdot c (\gamma) \cdot C( \lfloor \frac{|I|-1}{2} \rfloor) \cdot E ( \lfloor \frac{|I|-1}{2} \rfloor + 4) \\
               \notag
&& \times \sum_{ |K| \leq |I| -1}    \Big(    \frac{\eps  \cdot     |  \derm ( \Lie_{Z^K} A )  | }{(1+t+|q|)^{1-      c (\gamma)  \cdot c (\delta)  \cdot c( \lfloor \frac{|I|-1}{2} \rfloor) \cdot E ( \lfloor \frac{|I|-1}{2} \rfloor+ 4) \cdot \eps } \cdot (1+|q|)^{\gamma - 2\de }}    \\
&& + \frac{\eps \cdot  | \Lie_{Z^K}  A |  }{(1+t+|q|)^{1-      c (\gamma)  \cdot c (\delta)  \cdot c(\lfloor \frac{|I|-1}{2} \rfloor ) \cdot E (\lfloor \frac{|I|-1}{2} \rfloor + 4) \cdot \eps } \cdot (1+|q|)^{1+\gamma - 2\de }}   \Big) \;. 
\eea       
To control the lower order terms $| \Lie_{Z^J}  A_L |$\;, we plan carefully to use again, in the following Lemma \ref{HardytpyeestimatesontehtermscontainingALinthebadstructureofsourcestermsforAusefultoobtainenergyestimates}, the Hardy type inequality in the exterior (given in Corollary \ref{HardytypeinequalityforintegralstartingatROm}) to translate them into partial derivatives, but this time, we do not care any more if these are partial derivative of special components. In fact, since all of these are lower order terms, we can close the argument by choosing $\eps$ small enough, small depending on the number of derivatives that we want to control (and depending on other parameters that we shall all show so that one can follow carefully our argument) -- and obviously we must control at least some derivatives so that our applications of the weighted Klainerman-Sobolev inequalities would make sense.
\begin{lemma}\label{HardytpyeestimatesontehtermscontainingALinthebadstructureofsourcestermsforAusefultoobtainenergyestimates}
We have for $\ga \geq 3 \de $\,, and $0 < \de \leq \frac{1}{4}$\,,
                                         \beaa
\notag
&&  \int_{\Sigma^{ext}_{\tau} }  \frac{ (1+t)}{\eps}  \cdot \Big(  \sum_{|K| \leq |I|}  | \Lie_{Z^K}    \big(  A_L   \cdot     \derm A    \big)    |   \Big)^{2}  \cdot w(q)  \\
&\les&   \int_{\Sigma^{ext}_{\tau} } \Big[ C(q_0)   \cdot  c (\delta) \cdot c (\gamma)  \cdot E ( 4)   \\
&& \times \sum_{|K| = |I| }  \Big(  \frac{ \eps  \cdot     |  \derm ( \Lie_{Z^K} A )  |^2   }{ (1+t+|q|)  \cdot  (1+|q|)^{2\gamma - 4\de}  }   +    \frac{\eps \cdot   | \pa \Lie_{Z^K} A_L |^2   }{(1+t+|q|)^{1-      c (\gamma)  \cdot c (\delta)   \cdot E ( 4) \cdot \eps } \cdot (1+|q|)^{2\gamma - 4\de }}       \Big) \; \Big]  \cdot w(q)  \\
&& +   \int_{\Sigma^{ext}_{\tau} } \Big[  C(q_0)   \cdot  c (\delta) \cdot c (\gamma) \cdot C( |I|) \cdot E ( \lfloor \frac{|I|-1}{2} \rfloor + 4) \\
&& \times \sum_{ |K| \leq |I| -1}    \Big(    \frac{\eps  \cdot     |  \derm ( \Lie_{Z^K} A )  |^2 }{(1+t+|q|)^{1-      c (\gamma)  \cdot c (\delta)  \cdot c( \lfloor \frac{|I|-1}{2} \rfloor) \cdot E ( \lfloor \frac{|I|-1}{2} \rfloor+ 4) \cdot \eps } \cdot (1+|q|)^{2\gamma - 4\de }}      \Big)   \; \Big]   \cdot w(q)  \;. 
\eeaa   
\end{lemma}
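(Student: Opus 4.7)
My approach is to convert the pointwise bounds of Corollary~\ref{TheleeadingtermfortroublesomecomponentsintehsourcesforA} and of the pointwise estimate \eqref{estimateonlowerordertermsforthepotentialAinthesourcetermsforenergy} into a weighted $L^2$ estimate in the exterior by squaring, multiplying by the prefactor $(1+t)/\eps$, integrating against the weight $w(q)$ on $\Sigma^{ext}_\tau$, and finally invoking the exterior Hardy-type inequality of Corollary~\ref{HardytypeinequalityforintegralstartingatROm} to trade the pointwise factors $|\Lie_{Z^K}A_L|$ and $|\Lie_{Z^K}A|$ (which decay too slowly in $q$ to close the argument on their own) for their derivatives $|\pa\Lie_{Z^K}A_L|$ and $|\derm(\Lie_{Z^K}A)|$ respectively.

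First I expand $\Lie_{Z^K}(A_L\cdot\derm A)$ by the Leibniz rule into the schematic sum $\sum_{|J_1|+|J_2|\leq|K|}\Lie_{Z^{J_1}}A_L\cdot\derm(\Lie_{Z^{J_2}}A)$, absorbing the commutators between $\Lie_Z$ and $\derm$, as well as between $\Lie_Z$ and the contraction with $L$, into lower-order contributions of the same schematic type. I then split into the top-order endpoint cases $(|J_1|,|J_2|)=(|I|,0)$ and $(0,|I|)$, which are treated directly by Corollary~\ref{TheleeadingtermfortroublesomecomponentsintehsourcesforA}; the intermediate top-order cases $|J_1|+|J_2|=|I|$ with both positive, where one of the two factors carries at most $\lfloor|I|/2\rfloor$ Lie derivatives and is therefore controlled in $L^\infty$ by the weighted Klainerman--Sobolev inequality under the bootstrap \eqref{aprioriestimate}; and the strictly lower-order cases $|J_1|+|J_2|\leq|I|-1$, which are controlled by \eqref{estimateonlowerordertermsforthepotentialAinthesourcetermsforenergy}. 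After squaring, the contributions involving $|\derm(\Lie_{Z^K}A)|^2$ with $|A_L|$ as the amplitude factor absorb the $(1+t)/\eps$ prefactor simply by using $(1+t)\leq(1+t+|q|)$, producing directly the first and third families on the right-hand side of the lemma.

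The remaining, more delicate, contributions have the schematic form
\begin{equation*}
\frac{\eps\cdot|\Phi|^2}{(1+t+|q|)^{1-c\eps}\,(1+|q|)^{2+2\ga-4\de}}\cdot w(q),
\end{equation*}
with $\Phi=\Lie_{Z^K}A_L$ (top order) or $\Phi=\Lie_{Z^K}A$ (strictly lower order). I factor off $(1+|q|)^{-2}$ from the denominator and rewrite the integrand as $(1+t+r)^{-a}\cdot w(q)(1+|q|)^{-2}\cdot|\Phi|^2$ with $a=1-c(\ga)c(\de)E(\cdot)\eps\in[0,2]$, which is precisely the form to which Corollary~\ref{HardytypeinequalityforintegralstartingatROm} applies; the required decay at spatial infinity for $\Phi$ follows from the a priori finiteness of the weighted energy $\E_N(t)$ guaranteed by the bootstrap. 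Hardy then replaces $|\Phi|^2$ by $|\pa_r\Phi|^2\leq|\pa\Phi|^2$ inside the integral, producing the $|\pa\Lie_{Z^K}A_L|^2$ contribution in the first large bracket of the right-hand side, and, after further invoking \eqref{estimateonpartialderivativeofALcomponent} to express $|\pa\Lie_{Z^K}A_L|$ when $\Phi=\Lie_{Z^K}A$ in terms of tangential derivatives and manifestly lower-order quantities (actually here $\Phi=\Lie_{Z^K}A$ is directly good, giving $|\derm\Lie_{Z^K}A|$), the $|\derm(\Lie_{Z^K}A)|^2$ sum with $|K|\leq|I|-1$ in the second large bracket.

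\textbf{Main obstacle.} The principal technical difficulty is the bookkeeping of exponents: the $(1+t)/\eps$ prefactor, combined with the squared pointwise decay $\eps^2/[(1+t+|q|)^{2(2-2\de)}(1+|q|)^{2\ga-2}]$ from the Lorenz-gauge bound on $|A_L|$, must redistribute into exactly one power of $(1+t+|q|)^{-1}$ surviving in the final denominator, a second power consumed to absorb $(1+t)$, and exactly two powers of $(1+|q|)^{-2}$ available for Hardy, all while leaving the $(1+|q|)^{2\ga-4\de}$ factor and the small $\eps$-loss in the time exponent correctly threaded through both the top- and low-order branches. The hypothesis $\ga\geq 3\de$, $0<\de\leq\tfrac14$, together with smallness of $\eps$ depending on $|I|$, $\ga$ and $\de$, is precisely what makes this exponent alignment work; once this arithmetic is verified, the stated inequality is read off from the sum of the pointwise bounds combined with Hardy.
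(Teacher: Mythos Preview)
Your proposal is correct and follows essentially the same route as the paper: the paper first records the pointwise bound \eqref{thestructureofthebadtermALdermAusingboostrapassumptionanddecompistionofthesumandlowerordertermsexhibitedtodealwithAL} (which is precisely the combination of Corollary~\ref{TheleeadingtermfortroublesomecomponentsintehsourcesforA} and \eqref{estimateonlowerordertermsforthepotentialAinthesourcetermsforenergy} that you invoke), then squares and multiplies by $(1+t)/\eps$ to obtain \eqref{estimateonthebadtermproductALtimesdermAusingbootstrapsoastoclosethegronwallinequalityonenergy}, and finally applies the Hardy-type inequality of Corollary~\ref{HardytypeinequalityforintegralstartingatROm} to the terms carrying $|\Lie_{Z^K}A_L|^2$ and $|\Lie_{Z^K}A|^2$, exactly as you describe. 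Your explicit separation of the intermediate top-order case $|J_1|+|J_2|=|I|$ with both indices positive is a minor presentational refinement; the paper absorbs these silently into the lower-order bracket via the bootstrap, which amounts to the same thing.
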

\begin{proof}
We estimate the term $ | \Lie_{Z^I}    \big(  A_L   \cdot     \derm A    \big)    | $ in a way that gives a suitable control in the following estimate \eqref{thestructureofthebadtermALdermAusingboostrapassumptionanddecompistionofthesumandlowerordertermsexhibitedtodealwithAL}. Indeed, we have for $\ga \geq 3 \de $\,, and $0 < \de \leq \frac{1}{4}$\,.
                                         \bea\label{thestructureofthebadtermALdermAusingboostrapassumptionanddecompistionofthesumandlowerordertermsexhibitedtodealwithAL}
\notag
&& | \Lie_{Z^I}    \big(  A_L   \cdot     \derm A    \big)    | \\
\notag
&\les&  C(q_0)   \cdot  c (\delta) \cdot c (\gamma)  \cdot E ( 4)  \\
\notag
&& \times \sum_{|K| = |I| }  \Big(  \frac{ \eps  \cdot     |  \derm ( \Lie_{Z^K} A )  |   }{ (1+t+|q|)  \cdot  (1+|q|)^{\gamma - 2\de}  } \\
\notag
&&  +    \frac{\eps \cdot   |  \Lie_{Z^K} A_L |   }{(1+t+|q|)^{1-      c (\gamma)  \cdot c (\delta)   \cdot E ( 4) \cdot \eps } \cdot (1+|q|)^{1+\gamma - 2\de }}    \Big) \\
\notag
&& +  C(q_0)   \cdot  c (\delta) \cdot c (\gamma) \cdot C( \lfloor \frac{|I|-1}{2} \rfloor) \cdot E ( \lfloor \frac{|I|-1}{2} \rfloor + 4) \\
\notag
&& \times \sum_{ |K| \leq |I| -1}    \Big(    \frac{\eps  \cdot     |  \derm ( \Lie_{Z^K} A )  | }{(1+t+|q|)^{1-      c (\gamma)  \cdot c (\delta)  \cdot c( \lfloor \frac{|I|-1}{2} \rfloor) \cdot E ( \lfloor \frac{|I|-1}{2} \rfloor+ 4) \cdot \eps } \cdot (1+|q|)^{\gamma - 2\de }}    \\
&& + \frac{\eps \cdot  | \Lie_{Z^K}  A |  }{(1+t+|q|)^{1-      c (\gamma)  \cdot c (\delta)  \cdot c(\lfloor \frac{|I|-1}{2} \rfloor ) \cdot E (\lfloor \frac{|I|-1}{2} \rfloor + 4) \cdot \eps } \cdot (1+|q|)^{1+\gamma - 2\de }}   \Big) \;. 
\eea    
This was done in such a manner that we would have the correct control in the following estimate \eqref{estimateonthebadtermproductALtimesdermAusingbootstrapsoastoclosethegronwallinequalityonenergy} on $$\frac{ (1+t)}{\eps}  \cdot \Big(  \sum_{|K| \leq |I|}  | \Lie_{Z^K}    \big(  A_L   \cdot     \derm A    \big)    |   \Big)^{2} $$ with the \textit{right} factors so that the \textit{weighted} space integral $$\int_{\Sigma^{ext}_{\tau} }  \frac{ (1+t)}{\eps}  \cdot \Big(  \sum_{|K| \leq |I|}  | \Lie_{Z^K}    \big(  A_L   \cdot     \derm A    \big)    |   \Big)^{2}  \cdot w(q)  $$ would be suitably controlled in Lemma \ref{HardytpyeestimatesontehtermscontainingALinthebadstructureofsourcestermsforAusefultoobtainenergyestimates} after applying the Hardy type inequality (of Corollary \ref{HardytypeinequalityforintegralstartingatROm}). We have for $\ga \geq 3 \de $\,, and $0 < \de \leq \frac{1}{4}$\,,
                                         \bea\label{estimateonthebadtermproductALtimesdermAusingbootstrapsoastoclosethegronwallinequalityonenergy}
\notag
&&   \frac{ (1+t)}{\eps}  \cdot \Big(  \sum_{|K| \leq |I|}  | \Lie_{Z^K}    \big(  A_L   \cdot     \derm A    \big)    |   \Big)^{2} \\
\notag
&\les&  C(q_0)   \cdot  c (\delta) \cdot c (\gamma)  \cdot E ( 4)   \cdot \sum_{|K| = |I| }  \Big(  \frac{ \eps  \cdot     |  \derm ( \Lie_{Z^K} A )  |^2   }{ (1+t+|q|)  \cdot  (1+|q|)^{2\gamma - 4\de}  } \\
\notag
&&  +    \frac{\eps \cdot   |  \Lie_{Z^K} A_L |^2   }{(1+t+|q|)^{1-      c (\gamma)  \cdot c (\delta)   \cdot E ( 4) \cdot \eps } \cdot (1+|q|)^{2+2\gamma - 4\de }}       \Big) \\
\notag
&& +  C(q_0)   \cdot  c (\delta) \cdot c (\gamma) \cdot C( \lfloor \frac{|I|-1}{2} \rfloor) \cdot E ( \lfloor \frac{|I|-1}{2} \rfloor + 4) \\
\notag
&& \times \sum_{ |K| \leq |I| -1}    \Big(    \frac{\eps  \cdot     |  \derm ( \Lie_{Z^K} A )  |^2 }{(1+t+|q|)^{1-      c (\gamma)  \cdot c (\delta)  \cdot c( \lfloor \frac{|I|-1}{2} \rfloor) \cdot E ( \lfloor \frac{|I|-1}{2} \rfloor+ 4) \cdot \eps } \cdot (1+|q|)^{2\gamma - 4\de }}    \\
&& + \frac{\eps \cdot  | \Lie_{Z^K}  A |^2  }{(1+t+|q|)^{1-      c (\gamma)  \cdot c (\delta)  \cdot c(\lfloor \frac{|I|-1}{2} \rfloor ) \cdot E (\lfloor \frac{|I|-1}{2} \rfloor + 4) \cdot \eps } \cdot (1+|q|)^{2+2\gamma - 4\de }}   \Big) \;. 
\eea 
\end{proof}
In conclusion, both the leading term and the lower order terms have the right factors in the following estimate \eqref{HardyinequalityontheestimateonthebadtermproductALtimesdermAusingbootstrapsoastoclosethegronwallinequalityonenergy}, that would allow us to insert this successfully in the energy estimate \ref{Theveryfinal }. Indeed, for $\ga \geq 3\de$, and for $\eps$ small depending on $|I|$\;, on $\ga$ and on $\de$\;, we have
                                         \bea\label{HardyinequalityontheestimateonthebadtermproductALtimesdermAusingbootstrapsoastoclosethegronwallinequalityonenergy}
\notag
&&  \int_{\Sigma^{ext}_{\tau} } \frac{ (1+t)}{\eps}  \cdot \Big(  \sum_{|K| \leq |I|}  | \Lie_{Z^K}    \big(  A_L   \cdot     \derm A    \big)    |   \Big)^{2}  \cdot w(q)  \\
\notag
&\les&   \int_{\Sigma^{ext}_{\tau} } \Big[ C(q_0)   \cdot  c (\delta) \cdot c (\gamma)  \cdot C(|I|) \cdot  E ( \lfloor \frac{|I|}{2} \rfloor + 4)   \\
\notag
&& \times \sum_{|K| \leq |I| }  \Big(  \frac{ \eps  \cdot     |  \derm ( \Lie_{Z^K} A )  |^2  +  \eps \cdot |  \derm ( \Lie_{Z^K} h^1 )  |^2   }{ (1+t+|q|)   }   +    \frac{\eps \cdot   | \rderm ( \Lie_{Z^K} A ) |^2   }{(1+t+|q|)^{1-      c (\gamma)  \cdot c (\delta)   \cdot E ( 4) \cdot \eps } \cdot (1+|q|)}       \Big) \; \Big]  \cdot w(q)  \\
\notag
&& +   \int_{\Sigma^{ext}_{\tau} } \Big[  C(q_0)   \cdot  c (\delta) \cdot c (\gamma) \cdot C( |I|) \cdot E ( \lfloor \frac{|I|}{2} \rfloor + 4) \\
\notag
&& \times \sum_{ |K| \leq |I| -1}    \Big(    \frac{\eps  \cdot     |  \derm ( \Lie_{Z^K} A )  |^2 }{(1+t+|q|)^{1-      c (\gamma)  \cdot c (\delta)  \cdot c( \lfloor \frac{|I|-1}{2} \rfloor) \cdot E ( \lfloor \frac{|I|-1}{2} \rfloor+ 4) \cdot \eps } \cdot (1+|q|)^{2\gamma - 4\de }}      \Big)   \; \Big]   \cdot w(q)  \\
&&+   C(q_0)   \cdot  c (\delta) \cdot c (\gamma) \cdot C(|I|) \cdot E (  \lfloor \frac{|I|}{2} \rfloor  +4)  \cdot  \frac{\eps^3 }{(1+t)^{1-     c (\gamma)  \cdot c (\delta)  \cdot c(|I|) \cdot E ( \lfloor \frac{|I|}{2} \rfloor+ 4) \cdot \eps } }  \; .
\eea

\subsection{Upgrading the dispersive estimates for the Lie derivatives}\label{dealingwithLiederivativesupgrade}\

Unlike the case of the Einstein vacuum equations, and also unlike the case of the Einstein-Maxwell system, in the case of the Einstein-Yang-Mills equations, we need to have a \textit{more suitable} estimate for the following commutator term
\beaa
| \Lie_{Z^I}  ( g^{\la\mu} \derm_{\la}   \derm_{\mu}     A_{e_a} ) - g^{\la\mu}    \derm_{\la}   \derm_{\mu}  (  \Lie_{Z^I} A_{e_a}  ) | \; .
\eeaa
We notice that the term in the estimate of commutator that is behind the imposed Grönwall inequality, with an integral involving the gradients of the full components as previously used in the literature, is the term that appears with the weak factor $\frac{1}{|q|}$\;, and not the one that appears with the strong factor $\frac{1}{t}$\;. We realize that this troublesome term could be estimated in a more refined fashion as follows,
 \beaa
 \notag
 |\derm A_{\cal T} | &\les& \sum_{|I| \leq 1}  \frac{1}{(1+t+|q|)} \cdot | \Lie_{Z^I} A |  +  \sum_{ V^\prime \in \cal T } \sum_{|I| \leq 1}  \frac{1}{(1+|q|)} \cdot  | \Lie_{Z^I} A_{ V^\prime} | \; , \\
 \eeaa
 where now, we get an estimate where term with the weak factor is insensitive to the bad component $A_{{\underline{L}}}$ that bothered us, which when inserted to estimate the commutator term, leads to an estimate that is more refined (see the following estimate \eqref{commutationformaulamoreprecisetoconservegpodcomponentsstructure}). In fact, let  $\Phi_{\mu}$ be a one-tensor valued in the Lie algebra or a scalar. Then, we have for all $I$, and for any $V \in \cal T$, 
 \bea\label{commutationformaulamoreprecisetoconservegpodcomponentsstructure}
\notag
&&| \Lie_{Z^I}  ( g^{\la\mu} \derm_{\la}   \derm_{\mu}     \Phi_{V} ) - g^{\la\mu}    \derm_{\la}   \derm_{\mu}  (  \Lie_{Z^I} \Phi_{V}  ) |  \\
\notag
&\les&  \sum_{|K| < |I| }  | g^{\la\mu} \cdot \derm_{\la}   \derm_{\mu} (  \Lie_{Z^{K}}  \Phi_{V} ) | \\
\notag
&& +  \sum_{|J| + |K| \leq |I|, \; |K| < |I| }  \Big(   |   ( \Lie_{Z^{J}}   H)_{L  L} |   \cdot    \frac{1}{(1+t+|q|)} \cdot  \sum_{|M| \leq |K|+1}  | \derm ( \Lie_{Z^M}  \Phi )  |  \\
   \notag
 && +    |   ( \Lie_{Z^{J}}   H)_{L  L}   | \cdot   \frac{1}{(1+|q|)} \cdot   \sum_{|M| \leq |K|+1}    \sum_{ V^\prime \in \cal T } | \derm   ( \Lie_{Z^M}  \Phi _{V^\prime} ) |   \\
\notag
 && +   | ( \Lie_{Z^{J}}   H)_{L  \underline{L}} |   \cdot | \derm_{L}    \derm_{\underline{L}}  (  \Lie_{Z^{K}}  \Phi_{V} )  | \\
\notag
    &&  +    \frac{1}{(1 + t + |q|)  }  \cdot   |  ( \Lie_{Z^{J}}   H)_{L e_A }   | \cdot | \sum_{|M| \leq |K|+1} | \derm (  \Lie_{Z^{M}}  \Phi )| \\
    \notag
 && +       | m^{\mu \b}   ( \Lie_{Z^{J}}   H)_{\underline{L}\b}  \cdot    \derm_{L}   \derm_{\mu}     (  \Lie_{Z^{K}}  \Phi_{V} )  | + |   m^{\mu \b}   ( \Lie_{Z^{J}}   H)_{e_A \b}   \cdot   \derm_{e_A}   \derm_{\mu}    (  \Lie_{Z^{K}}  \Phi_{V} )   | \Big) \; .\\
  \eea
In the above estimate the terms with the weak factors do not see the bad component $A_{{\underline{L}}}$ (see the following estimate \eqref{Thecommutationformulawithpossibilityofsperationoftangentialcomponentsaswell}). Let  $\Phi_{\mu}$ be a tensor valued either in the Lie algebra or a scalar, satisfying the following tensorial wave equation
\beaa
 g^{\la\a} \derm_{\la}   \derm_{\a}   \Phi_{\mu}= S_{\mu} \, , 
\eeaa
where $S_{\mu}$ is the source term. Then, we have for any $V \in \cal T$,
 \bea\label{Thecommutationformulawithpossibilityofsperationoftangentialcomponentsaswell}
\notag
&&| \Lie_{Z^I}  ( g^{\la\mu} \derm_{\la}   \derm_{\mu}     \Phi_{V} ) - g^{\la\mu}    \derm_{\la}   \derm_{\mu}  (  \Lie_{Z^I} \Phi_{V}  ) |  \\
  \notag
   &\les&  \sum_{|K| < |I| }  | g^{\la\mu} \cdot \derm_{\la}   \derm_{\mu} (  \Lie_{Z^{K}}  \Phi_{V} ) | \\
   \notag
&&+  \frac{1}{(1+t+|q|)}  \cdot \sum_{|K|\leq |I|,}\,\, \sum_{|J|+(|K|-1)_+\le |I|} \,\,\, | \Lie_{Z^{J}} H |\, \cdot | \derm ( \Lie_{Z^K}  \Phi )  | \\
   \notag
&& +   \frac{1}{(1+|q|)}  \cdot \sum_{|K|\leq |I|,}\,\, \sum_{|J|+(|K|-1)_+\le |I|} \,\,\, | \Lie_{Z^{J}} H_{L  L} |\, \cdot  \big( \sum_{ V^\prime \in \cal T } | \derm ( \Lie_{Z^K}  \Phi _{V^\prime} )  | \:  \big) \; , \\
\eea
  where $(|K|-1)_+=|K|-1$ if $|K|\geq 1$ and $(|K|-1)_+=0$ if $|K|=0$. We now insert this estimate on the commutator term in the estimate of Lindblad-Rodnianski (see Corollary 7.2 of \cite{LR10}, that is at the heart of their argument for the case of the Einstein vacuum equations) that we can use in order to upgrade the estimate on the Lie derivatives (see the following estimate \eqref{estimatethatallowsupgradeincorporatingtermsfromthecommutationformula}). We have for $\gamma^\prime$ such that $-1 \leq \gamma^\prime < \gamma - \delta$, and for $ \delta <    1/2 $, and for all $U, V\in  \{L,\Lb,A,B\}$,
\bea\label{estimatethatallowsupgradeincorporatingtermsfromthecommutationformula}
   \notag
&& (1+t+|q|) \cdot |\varpi(q) \cdot \derm ( \Lie_{Z^J} A)_{V} (t,x)| \\
   \notag
 &\les&   c (\gamma^\prime) \cdot  c (\delta) \cdot c (\gamma) \cdot C ( |J|  ) \cdot E ( |J| + 4) \cdot \eps \, \\
\notag
&& + c (\gamma^\prime) \cdot c (\gamma)  \cdot c (\delta)  \cdot E ( 3)  \cdot \eps  \cdot  \int_0^t  \frac{1}{(1+\tau)} \cdot (1+\tau+|q|) \cdot \|\varpi(q)  \cdot  \derm  ( \Lie_{Z^J} A_{V}) (\tau,\cdot) \|_{L^\infty (\Sigma^{ext}_{\tau} )} d \tau \\
       \notag
    && + \sum_{|K| \leq |J|} \int_0^t (1+\tau) \cdot  \varpi(q) \cdot  \|    \Lie_{Z^K}  g^{\la\mu} \derm_{\la}   \derm_{\mu}  A_V (\tau,\cdot) \|_{L^\infty(\overline{D}_\tau)} d\tau \\
  \notag
& &+ \int_0^t   \frac {(1+\tau) \cdot  \varpi(q)}{(1+\tau+|q|)} \cdot    \sum_{|K|\leq |J|} \Big( \sum_{|J^{\prime}|+(|K|-1)_+\le |J|} \,\,\,
|\Lie_{Z^{J^{\prime}}} H|\cdot {|\derm ( \Lie_{Z^{K} } A) |}  \Big) d\tau \\
& &+  \int_0^t  \frac {(1+\tau) \cdot  \varpi(q)}{(1+|q|)} \cdot  \sum_{|K|\leq |J|} \Big( \sum_{|J^{\prime}|+(|K|-1)_+\leq |J|} \!\!\!\!\!| \Lie_{Z^{J^{\prime}} }H_{LL}|    \cdot \sum_{X \in \cal V} |\derm ( \Lie_{Z^{K}} A )_{X} | \Big) d\tau \; , 
\eea
where
\bea
\cal V :=  \begin{cases}  \cal T \;  ,\quad\text{if }\quad V \in \cal T \; ,\\
   \cal U \; , \,\quad\text{if }\quad V \in \cal U \; . \end{cases}   
\eea
As mentioned, the terms that have the good decay factor in $t$ do not generate a Grönwall type integral (see the following estimate \eqref{estimateonthegooddecayingpartofthecommutatorterm}). For $\gamma^\prime$ such that $-1 \leq \gamma^\prime < \gamma - \delta$, and $ \delta <    1/2 $, we have
  \bea\label{estimateonthegooddecayingpartofthecommutatorterm}
  \notag
   &&    \int_0^t   \frac {(1+\tau) \cdot  \varpi(q)}{(1+\tau+|q|)} \cdot    \sum_{|K|\leq |J|} \Big( \sum_{|J^{\prime}|+(|K|-1)_+\le |J|} \,\,\,
|\Lie_{Z^{J^{\prime}}} H|\cdot {|\derm ( \Lie_{Z^{K} } A) |}  \Big) d\tau \\
&\les&  c (\delta) \cdot c (\gamma) \cdot C ( |J| ) \cdot E ( |J| + 2)  \cdot \eps^2 \; .
\eea
However, the terms that have the weak decaying factor in $|q|$ generate a Grönwall integral (see the following estimate \eqref{estimateonthebadpartofthecommutatortermtobtainaGronwallforcomponents}), but this time, it does not involve the full components, but only the good ones $A_{\cal T}$\;, for which the non-linear wave equations do not have in their sources the troublesome term $A_{e_a}  \cdot     \derm A_{e_a} $\;. For $M \leq \eps$, under the induction hypothesis on both $A$ and on $h^1$, for  $|K|\leq |J| -1$, we have for $\gamma^\prime$ such that $-1 \leq \gamma^\prime < \gamma - \delta$, and $ \delta \leq  \frac{1}{4} $, and for $\cal V \in \{\cal T\; , \cal U\}$,
\bea\label{estimateonthebadpartofthecommutatortermtobtainaGronwallforcomponents}
\notag
& &  \int_0^t  \frac {(1+\tau) \cdot \varpi(q)}{1+|q|} \cdot   \sum_{|K|\leq |J|} \Big( \sum_{|J^{\prime}|+(|K|-1)_+\leq |J|} \!\!\!\!\!| \Lie_{Z^{J^{\prime}} }H_{LL}|   \cdot\sum_{X \in \cal V} |\derm ( \Lie_{Z^{K}} A )_{X} | d\tau  \Big)\; . \\
\notag
&\leq &   \int_0^t  C(q_0)   \cdot  c (\delta) \cdot c (\gamma) \cdot C(|J|) \cdot E ( |J| + 3)  \cdot \frac{\eps \cdot  \varpi(q) }{(1+t+|q|)^{1-      c (\gamma)  \cdot c (\delta)  \cdot c(|J|) \cdot E ( |J|+ 3) \cdot \eps } \cdot (1+|q|)^{2+\gamma - 2 \de }}  d\tau \\
&&+    \int_0^t    c (\delta) \cdot c (\gamma) \cdot E (  4 )  \cdot \frac{\eps }{(1+t+|q|) }     \cdot   (1+t+|q|) \cdot \varpi(q) \cdot \sum_{|K| = |J|} \sum_{X \in \cal V} |\derm ( \Lie_{Z^{K}} A )_{X} |  d\tau \; .
 \eea
This allows us to establish a Grönwall type inequality that enables us to upgrade first for the good components separately, and consequently for $\derm A_{e_a} $ (see the following estimate \ref{GronwallinequalitongradientofAandh1withliederivativesofsourceterms}). Let $ 0\leq \delta \leq  \frac{1}{4} $, and $\ga > \de$, and $M \leq \eps \leq 1$. We assume the induction hypothesis holding true for both $A$ and on $h^1$, for all $|K|\leq |J| -1$. Let
\beaa
\varpi(q) :=\begin{cases}
(1+|q|)^{1+\gamma^\prime},\quad\text{when }\quad q>0\; , \\
     1 \,\quad\text{when }\quad   q<0 \;, \end{cases} 
\eeaa
and let
 \beaa
\cal V :=  \begin{cases}  \cal T \;  ,\quad\text{if }\quad V \in \cal T \; ,\\
   \cal U \; , \,\quad\text{if }\quad V \in \cal U \; . \end{cases}   
\eeaa
We have for $\ga^\prime =  \gamma - 2 \de$, the following estimate in the exterior for $A$, 
            \bea\label{GronwallinequalitongradientofAandh1withliederivativesofsourceterms}
   \notag
&& (1+t+|q|) \cdot |\varpi(q) \cdot \derm ( \Lie_{Z^J} A)_V | \\
   \notag
   &\les&   C(q_0)  \cdot  c (\delta) \cdot c (\gamma) \cdot C ( |J|  ) \cdot E ( |J| + 4) \cdot \eps \cdot (1+t)^{     c (\gamma)  \cdot c (\delta)  \cdot c(|J|) \cdot E ( |J|+ 4) \cdot \eps }  \\
\notag
&& +  c (\delta) \cdot c (\gamma) \cdot E (  4 )  \cdot \eps  \cdot  \int_0^t  \frac{1}{(1+\tau)} \cdot (1+\tau+|q|) \cdot  \sum_{X \in \cal V}  \|\varpi(q)  \cdot  \derm  ( \Lie_{Z^J} A)_{X} (\tau,\cdot) \|_{L^\infty (\Sigma^{ext}_{\tau} )} d \tau \\
    && + \sum_{|K| \leq |J|} \int_0^t (1+\tau) \cdot  \varpi(q) \cdot  \|    \Lie_{Z^K}  g^{\la\mu} \derm_{\la}   \derm_{\mu}  A_V (\tau,\cdot) \|_{L^\infty(\overline{D}_\tau)} d\tau \; .
 \eea
However, in order for us to apply the upgrade on $\derm A_{e_a} $ in an upgrade for $\derm A_{{\underline{L}}}$\;, we estimate the product $$  \sum_{|K| + |I| \leq |J| }   | \Lie_{Z^K}  A_{e_a}   |  \cdot | \derm   ( \Lie_{Z^I} A_{e_a} )  | \; ,$$ which involves estimating $ | \Lie_{Z^K}  A_{e_a}   | $ (which is not the covariant gradient). So how to translate an estimate on the covariant gradient of specific components $|\derm ( \Lie_{Z^I}  A_{\cal T} )|$\;, into an estimate on the partial derivative $| \pa_r ( \Lie_{Z^I}  A_{e_a} ) |$\;, which is the term that is actually needed to be estimated in order to integrate to estimate $\Lie_{Z^I}  A_{e_a} $? We use the specific fact that it is not just any component, but an  $A_{e_a}$ component, and we notice that $\derm_{r} e_a = 0 $\;. This allows us to get the right estimate on this product (see the following estimate \eqref{upgradeddecayestimateonthetermAe_agradAe_a}), which we shall use to upgrade for the full components (see the following estimate \eqref{SteptwoforinductonforA}). Let $ 0\leq \delta \leq  \frac{1}{4} $, and $\ga > \de$, and $M \leq \eps \leq 1$. We assume the induction hypothesis holding true for both $A$ and on $h^1$, for all $|K|\leq |J| -1$. Then, in the exterior region $\overline{C}$, we have the following estimate
     \bea\label{upgradeddecayestimateonthetermAe_agradAe_a}
     \notag
  &&   \sum_{|K| + |I| \leq |J| }   | \Lie_{Z^K}  A_{e_a}   |  \cdot | \derm   ( \Lie_{Z^I} A_{e_a} )  |    \\
  \notag
       &\leq&  C(q_0)  \cdot  c (\delta) \cdot c (\gamma) \cdot C ( |J|  ) \cdot E ( |J| + 4) \cdot     \frac{\eps}{ (1+t)^{  2-   c (\gamma)  \cdot c (\delta)  \cdot c(|J|) \cdot E ( |J|+ 4) \cdot \eps }\cdot  (1+|q|)^{1+2\ga - 4 \de}  }  \; .\\
\eea
Let $ 0 < \delta \leq  \frac{1}{4} $, and $\ga \geq 3 \de$, and $M \leq \eps \leq 1$. We assume the induction hypothesis holding true for both $A$ and on $h^1$, for all $|K|\leq |J| -1$. Then, in the exterior region $\overline{C}$, we have the following estimate on all the components of the Einstein-Yang-Mills potential,
      \bea\label{SteptwoforinductonforA}
   \notag
&& | \derm ( \Lie_{Z^J} A) | \\
   \notag
        &\les&   C(q_0)  \cdot  c (\delta) \cdot c (\gamma) \cdot C ( |J|  ) \cdot E ( |J| + 4) \cdot \eps \cdot \frac{1}{ (1+t)^{  1-   c (\gamma)  \cdot c (\delta)  \cdot c(|J|) \cdot E ( |J|+ 4) \cdot \eps }\cdot  (1+|q|)^{1+\ga - 2 \de}  }  \; .\\
 \eea
We point out that here, the term $ \sum_{|K| + |I| \leq |J| }  | \Lie_{Z^K} A_L  | \cdot    | \derm  ( \Lie_{Z^I}  A  ) | $ is not a problem this time, since we can estimate the product successfully using the a priori estimates (see the following estimate \eqref{upgradeddecayestimateonthetermALgradientA}) and this is thanks to the Lorenz gauge estimate on $A_L$ (see estimate \eqref{estimategoodcomponentspotentialandmetric}). Let $ 0\leq \delta \leq  \frac{1}{4} $, and $\ga > \de$, and $M \leq \eps \leq 1$. We assume the induction hypothesis holding true for both $A$ and on $h^1$, for all $|K|\leq |J| -1$. 
   
  Then, in the exterior region $\overline{C}$, we have the following estimate,
        \bea\label{upgradeddecayestimateonthetermALgradientA}
        \notag
 && \sum_{|K| + |I| \leq |J| }  | \Lie_{Z^K} A_L  | \cdot    | \derm  ( \Lie_{Z^I}  A  ) |  \\
 \notag
    &\les & C(q_0)   \cdot  c (\delta) \cdot c (\gamma) \cdot C(|J|) \cdot E ( |J| + 3)  \cdot \frac{\eps }{(1+t+|q|)^{2- c (\gamma)  \cdot c (\delta)  \cdot c(|J|) \cdot E ( |J|+ 3) \cdot \eps } \cdot (1+|q|)^{1+2\gamma -4\de }} \; .\\
  \eea

\subsection{The closure of the bootstrap argument on the energy}\

\begin{proposition} \label{THEpropositiontoclosetheargumenttoboundtheenergy}
Let $N \geq 11$\;. We have for $\ga \geq 3 \de $\,, for $0 < \de \leq \frac{1}{4}$\,, for $\eps$ small, enough depending on $q_0$\,, on $\ga$\;, on $\de$\;, on $N$ and on $\mu < 0$\;, and for $M \leq \eps^2 \leq 1$\;, and for $
\overline{ \E}_{N+2} \leq \eps$ (defined in \eqref{definitionoftheenergynormforinitialdata}), that under the bootstrap assumption \eqref{aprioriestimate}, we have
  \beaa
   \notag
     \E_{N} (t) &\leq&  \frac{E(N) }{2} \cdot \eps  \cdot  (1+t)^{ \de }  \; .
\eeaa
\end{proposition}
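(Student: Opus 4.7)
The plan is to upgrade the bootstrap assumption \eqref{aprioriestimate} by deriving a closed Grönwall-type inequality for $\E_N(t)^2$. First I would apply the energy estimate of Lemma \ref{Theveryfinal } to $\Phi_V = \Lie_{Z^I} A_V$ for every $V \in {\cal U}$ and every multi-index $|I| \leq N$, and similarly to each component $\Lie_{Z^I} h^1_{VV'}$. Summing the squared inequalities across components and multi-indices yields
\beaa
\E_N(t)^2 &\les& \overline{\E}_{N}^2 + C(N)\,\eps \int_0^t \frac{\E_N(\tau)^2}{(1+\tau)}\, d\tau \\
&& + \sum_{|I|\leq N} \int_0^t \int_{\Sigma^{ext}_\tau} \frac{(1+\tau)}{\eps} \cdot \Big( |\Lie_{Z^I}(g^{\la\mu}\derm_\la\derm_\mu A)|^2 + |\Lie_{Z^I}(g^{\la\mu}\derm_\la\derm_\mu h^1)|^2 \Big) \cdot w(q)\cdot d^3x\cdot d\tau \, .
\eeaa

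Next I would insert the null-frame decomposed wave equations of Subsection \ref{recastingasystemofnonlinearwaves} and split the resulting source terms into three classes. The \emph{safe} contributions such as $\rderm h \cdot \derm h$, $\rderm A \cdot \derm A$, $A^2 \cdot \derm A$, $h \cdot (\derm h)^2$, $A^4$, and $O(h \cdot \derm h \cdot \derm A)$ are handled by combining the harmonic-gauge extraction \eqref{wavecoordinatesestimateonLiederivativesZonmetric} and the Lorenz-gauge extraction \eqref{LorenzgaugeestimateforgradientofLiederivativesofAL} with the upgraded pointwise bound \eqref{SteptwoforinductonforA}; in each such product one factor is bounded pointwise (supplying the $\eps (1+\tau+|q|)^{-1+C\eps}(1+|q|)^{-1-\ga+2\de}$ decay), while the remaining $L^2$ factor is absorbed either into the Grönwall term of Lemma \ref{Theveryfinal } or into the positive spacetime integral $\int |\rderm \Phi_V|^2 \,\widehat{w}/(1+|q|)$ already on the left-hand side. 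The two \emph{genuinely bad} contributions $A_{e_a} \cdot \derm A_{e_a}$ in the equation for $A_{\underline L}$ and $A_L \cdot \derm A$ in every $A$-equation are inserted verbatim from the already-prepared bounds \eqref{HardyinequalityontheestimateonthebadtermproductA_atimesdermA_ausingbootstrapsoastoclosethegronwallinequalityonenergy} and \eqref{HardyinequalityontheestimateonthebadtermproductALtimesdermAusingbootstrapsoastoclosethegronwallinequalityonenergy}, each delivering a clean bound of the shape $C(N)\,\eps \int_0^t \E_N(\tau)^2 (1+\tau)^{-1-2\de}\, d\tau + C(N) \eps^3 (1+t)^{C(N)\eps}$. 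The remaining inhomogeneity $g^{\a\b}\derm_\a \derm_\b h^0$ in the $h^1$ wave equation is treated directly from \eqref{guessonpropagationofthesphericallsymmetricpart}: since $M \leq \eps^2$ and $h^0$ is explicit, this term contributes at worst $C\eps^3$ after the weighted spacetime integration.

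Collecting the three classes produces the closed inequality
\beaa
\E_N(t)^2 &\leq& C(N)\,\overline{\E}_{N+2}^2 + C(N)\,\eps^3 (1+t)^{C(N)\eps} + C(N)\,\eps \int_0^t \frac{\E_N(\tau)^2}{(1+\tau)}\, d\tau \, ,
\eeaa
and Grönwall's lemma gives $\E_N(t)^2 \leq C(N)(\overline{\E}_{N+2}^2 + \eps^3)(1+t)^{C(N)\eps}$. I would then choose $\eps$ small enough (depending on $N$, $\ga$, $\de$, $q_0$, $\mu$) so that $C(N)\eps \leq 2\de$, and use $\overline{\E}_{N+2} \leq \eps$ together with $M \leq \eps^2$ to ensure $C(N)(\overline{\E}_{N+2}^2 + \eps^3) \leq E(N)^2 \eps^2 / 4$, which gives exactly $\E_N(t) \leq \frac{E(N)}{2}\,\eps\, (1+t)^\de$ and closes the bootstrap. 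The hardest point of the argument is precisely the closure of the bad terms: a naive insertion of $A_L \cdot \derm A$ or $A_{e_a} \cdot \derm A_{e_a}$ into the energy estimate introduces a factor $1/(1+|q|)$ that is incompatible with a Grönwall step. The delicate fact exploited is that the Lorenz-gauge gain \eqref{estimategoodcomponentspotentialandmetric} on $A_L$, the special identity $\pa_r A_{e_a} = \derm_r A_{e_a}$ from \eqref{partialderivativeindirectionofrofAeacomponenentiscovariantderivativeofAeacomponent}, and the Hardy inequality of Corollary \ref{HardytypeinequalityforintegralstartingatROm} jointly trade these bad weights for the integrable factor $(1+|q|)^{-2\ga+4\de}$ — at the cost of only a $C\eps$ loss in the time exponent, which is then absorbed by the freedom of choosing $\delta$.
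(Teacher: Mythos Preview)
Your approach is correct and matches the paper's strategy: apply the energy estimate of Lemma~\ref{Theveryfinal } to each $\Lie_{Z^I}A_V$ and $\Lie_{Z^I}h^1_{VV'}$, insert the prepared bad-term bounds \eqref{HardyinequalityontheestimateonthebadtermproductA_atimesdermA_ausingbootstrapsoastoclosethegronwallinequalityonenergy} and \eqref{HardyinequalityontheestimateonthebadtermproductALtimesdermAusingbootstrapsoastoclosethegronwallinequalityonenergy}, then close via Gr\"onwall. One nuance worth correcting: the $A_L\cdot\derm A$ bound \eqref{HardyinequalityontheestimateonthebadtermproductALtimesdermAusingbootstrapsoastoclosethegronwallinequalityonenergy} does \emph{not} deliver the clean $(1+\tau)^{-1-2\de}$ you claim --- its top-order piece carries only $(1+\tau)^{-1}$ (still fine for Gr\"onwall) and its lower-order ($|K|\leq|I|-1$) piece carries the worse $(1+\tau)^{-1+c\eps}$, which is precisely why the paper records the explicit two-tier hierarchy in \eqref{Theinequalityontheenergytobeusedtoapplygronwallrecursively} and closes recursively in $|I|$; in your single-level formulation that lower-order piece must instead be absorbed into the $C(N)\eps^3(1+t)^{C(N)\eps}$ inhomogeneity by invoking the bootstrap on $\E_{|I|-1}$, which works but should be stated.
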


\begin{proof}
We have for $\ga \geq 3 \de $\,, for $0 < \de \leq \frac{1}{4}$\,, and for $\eps$ small, enough depending on $q_0$\,, on $\ga$\;, on $\de$\;, on $|I|$ and on $\mu$\;, and for $M \leq \eps^2 \leq 1$\;, that 
     \bea\label{Theinequalityontheenergytobeusedtoapplygronwallrecursively}
   \notag
 &&     \E_{|I|} (t_2) \\
   \notag
&\les&    C(q_0)   \cdot  c (\delta) \cdot c (\gamma) \cdot C(|I|) \cdot \big(    E (|I|  ) +E (  \lfloor \frac{|I|}{2} \rfloor  +6)   \big)     \cdot  \Big[  \sum_{|K| \leq |I |}      \int_{t_1}^{t_2}        \frac{\eps  }{(1+t)}   \cdot \E_{|K|} (t) \cdot   dt   \\  
&& +     \sum_{|K| \leq |I | -1}      \int_{t_1}^{t_2}         \frac{\eps  }{(1+t)^{1-      c (\gamma)  \cdot c (\delta)  \cdot c(|I| ) \cdot E ( \lfloor \frac{|I|}{2} \rfloor  +4) \cdot \eps } }   \cdot \E_{|K|} (t) \cdot   dt  \\
&&+   \eps^2  \cdot (1+t_2)^{ c (\gamma)  \cdot c (\delta)  \cdot c(|I|) \cdot E ( \lfloor \frac{|I|}{2} \rfloor+ 5) \cdot \eps }   +   \eps \cdot \E_{|I|+2} (t_1)  \Big]   \; .
\eea
This leads to
  \bea\label{Theboundontheenergybytimetbyepsilonandboostrapassumptionsandinitialdata}
   \notag
     \E_{|I|} (t_2) &\les&    (1+t_2)^{ C(q_0)   \cdot  c (\delta) \cdot c (\gamma) \cdot C( |I|   )  \cdot \big(    E (|I| ) +E (  \lfloor \frac{|I| }{2} \rfloor  +6)   \big)      \cdot \eps } \\
        \notag
     && \times C(q_0)   \cdot  c (\delta) \cdot c (\gamma) \cdot C( |I|   )  \cdot   \big(    E (|I| ) +E (  \lfloor \frac{|I| }{2} \rfloor  +6)   \big)  \cdot   \Big[  \eps     +  \E_{|I|+2} (t_1)  \Big]  \; .
\eea
Hence, we get the result.
\end{proof}
Finally, we proved our Theorem \ref{Thetheoremofexteriorstabilityfornequalthree} through a continuity argument by upgrading the energy estimate in \eqref{aprioriestimate}.


\begin{thebibliography}{99}



\bibitem{Alin1} S. Alinhac, \emph{The null condition for quasilinear wave equations in two dimensions I}, Invent. Math \textbf{145} (2001), 597--618.


\bibitem{Alin2}
S. Alinhac, \emph{Remarks on energy inequalities for wave and Maxwell equations on a curved background}, Math. Ann., 329(4):707–722, 2004.


\bibitem{And}
L. Andersson, \emph{Cosmological models and stability}, Fundam. Theor. Phys., 177:277–303, 2014.

\bibitem{ABB16} L. Andersson, T. B\"ackdahl, P. Blue,
\emph{Decay of solutions to the Maxwell equation on the Schwarzschild
  background}, arXiv:1501.04641.
  
\bibitem{ABB19} L. Andersson, T. B\"ackdahl, P. Blue and S. Ma, \emph{Stability for linearized gravity on the Kerr spacetime}, arXiv:1903.03859.


\bibitem{AB15}
L. Andersson, P. Blue, \emph{Hidden symmetries and decay for the wave
  equation on the Kerr spacetime}. Ann. of Math. (2) 182 (2015),
no. 3, 787-853. 
\bibitem{AB15_02}
L. Andersson, P. Blue, \emph{Uniform energy bound and asymptotics for the Maxwell field on a slowly rotating Kerr black hole exterior}. J. Hyperbolic Differ. Equ. 12 (2015), no. 4, 689-743. 

\bibitem{ABWY}
L. Andersson, P. Blue, Z. Wyatt, and S-T. Yau, \emph{Global stability of space-times with supersymmetric compactifications}, arXiv 2006.00824, 2020.

\bibitem{AF}
L. Andersson and D. Fajman, \emph{Nonlinear stability of the Milne model with matter}, Communications in Mathematical Physics, 2020.

\bibitem{Bieri} L. Bieri, \emph{An Extension of the Stability Theorem of the Minkowski Space in General Relativity}, Journal of Differential Geometry 86 no. 1 (2010), 17--70.

\bibitem{BCY}  L. Bieri, P. Chen, S-T. Yau, \emph{Null Asymptotics of Solutions of the Einstein-Maxwell Equations in General Relativity and Gravitational Radiation}, Advances in Theoretical and Mathematical Physics, 15, 4, (2011).

\bibitem{BZ} L. Bieri, N. Zipser, \emph{Extensions of the stability theorem of the Minkowski space in general relativity}, American Mathematical Society, Providence, RI, 2009.

\bibitem{BFJST1} L. Bigorgne, D. Fajman, J. Joudioux, J. Smulevici, M. Thaller,  \emph{Asymptotic Stability of Minkowski Space-Time with Non-compactly Supported Massless Vlasov Matter}, Arch. Rational Mech. Anal. 242 (2021), 1--147.

\bibitem{Blue} P. Blue, \emph{Decay of the Maxwell field on the Schwarzschild manifold}, J. Hyperbolic Differ. Equ. 5 (2008), no. 4, 807-856.

\bibitem{CB1} Y. Choquet-Bruhat, \emph{Théorème d'Existence pour certains systèmes d'équations aux deriv\'ees partielles nonlinéaires}, Acta Math. \textbf{88} (1952), 141-225.

\bibitem{CB3} Y. Choquet-Bruhat, \emph{The null condition and asymptotic expansions for the
Einstein's equations}, Ann. Phys. (Leipzig) \textbf{9} (2000), 258-266.

\bibitem{CB-Chri} Y. Choquet-Bruhat and D. Christodoulou, \emph{Existence of global solutions of the Yang-Mills, Higgs and spinor field equations in 3+1 dimensions}, Ann. Sci. Ecole Norm. Sup. (4), 14(4):481–506 (1982), 1981.

\bibitem{CB-G} Y. Choquet-Bruhat and R. P. Geroch, \emph{Global aspects of the Cauchy problem in General Relativity}, CMP \textbf{14} (1969), 329-335.

 \bibitem{Chr1} D. Christodoulou, \emph{Global solutions of nonlinear hyperbolic equations for small initial data}, Comm. Pure Appl. Math., 39(2):267–282, 1986.


\bibitem{Chr2} D. Christodoulou, \emph{The Global Initial Value Problem in General Relativity}, The Ninth Marcel Grossmann Meeting (Rome 2000), V.G. Gurzadyan, R.T. Jansen, editors, R. Ruffini, editor and series editor, World Scientific (2002) 44-54.

\bibitem{C-K} D. Christodoulou, S. Klainerman, \emph{The global nonlinear stability of the Minkowski space}, Princeton Math. Series {\bf 41}, 1993.

\bibitem{CS} P. Chru\'sciel, J. Shatah, \emph{Global existence of solutions of the Yang-Mills equations on globally hyperbolic four-dimensional Lorentzian manifolds.}
Asian J. Math. 1 (1997), no. 3, 530-548. 

\bibitem{Daf} M. Dafermos, \emph{On "time-periodic" black-hole solutions to certain spherically symmetric Einstein-matter systems}, Commun. Math. Phys. 238:411-427, 2003.

\bibitem{DHR16} M. Dafermos, G. Holzegel, I. Rodnianski, \emph{The linear stability of the Schwarzschild solution to gravitational perturbations}, Acta Math., 222 (2019), 1–214.
    
 \bibitem{DHR17} M. Dafermos, G. Holzegel, I. Rodnianski, M. Taylor, \emph{Quasilinear wave equations on asymptotically flat spacetimes with applications to Kerr black holes}, arXiv:2212.14093.

\bibitem{DHRT}
M. Dafermos, G. Holzegel, I. Rodnianski and M. Taylor, \emph{The non-linear stability of the Schwarzschild family of black holes}, arXiv:2104.08222.

 \bibitem{EM1} D. Eardley, V. Moncrief, \emph{The global existence of Yang-Mills-Higgs fields in 4-dimensional Minkowski space. I. Local existence and smoothness properties}, Comm. Math. Phys. 83 (1982), no. 2, 171-191.

\bibitem{EM2} D. Eardley, V. Moncrief, \emph{The global existence of Yang-Mills-Higgs fields in 4-dimensional Minkowski space. II. Completion of proof}, Comm. Math. Phys. 83 (1982), no. 2, 193-212. 

\bibitem{FJS} D. Fajman, J. Joudioux, and J. Smulevici, \emph{The Stability of the Minkowski space for the Einstein-Vlasov system}, Analysis \& PDE 14 (2021), 425-531.

\bibitem{F} H. Friedrich. \emph{On the existence of n-geodesically complete or future complete solutions of Einstein’s field equations with smooth asymptotic structure}, Comm. Math. Phys., 107(4):587– 609, 1986.

\bibitem{G1} S. Ghanem, \emph{The global non-blow-up of the Yang-Mills curvature on curved space-times}, Journal of Hyperbolic Differential Equations, Vol. {\bf 13}, No. 03, 603-631 (2016), arXiv:1312.5476.
  
 \bibitem{G2} S. Ghanem, \emph{On uniform decay of the Maxwell fields on black hole space-times}, Reviews in Mathematical Physics Volume No. 36, Issue No. 06 (2024), arXiv:1409.8040.
 
\bibitem{G3} S. Ghanem, D. H\"afner, \emph{The decay of the SU(2) Yang-Mills fields on the Schwarzschild black hole for spherically symmetric small energy initial data}, J. Geom. Phys. 123 (2018), 310--342, arXiv:1604.04477.

 \bibitem{G4} S. Ghanem, \emph{The global stability of the Minkowski space-time in higher dimensions}, arXiv:2310.07954.
 
 \bibitem{G5} S. Ghanem, \emph{Energy estimates for the Einstein-Yang-Mills fields and applications}, arXiv:2310.08611.
 
  \bibitem{G6} S. Ghanem, \emph{Exterior stability of the $(1+3)$-dimensional Minkowski space-time solution to the Einstein-Yang-Mills equations}, arXiv:2310.08196.
  
\bibitem{HHV} D. H\"afner, P. Hintz and A. Vasy, \emph{Linear stability of slowly rotating Kerr black holes}, Invent. Math. \textbf{223} (3), 1227--1406, arXiv:1906.00860.

\bibitem{HE} S. W. Hawking \& G. F. R. Ellis, \emph{The Large Scale Structure of Space-time}, Cambridge: Cambridge University Press, 1973.

\bibitem{Hint} P. Hintz, \emph{Non-linear Stability of the Kerr–Newman–de Sitter Family of Charged Black Holes}, Annals of PDE, 4(1):11, Apr 2018.

\bibitem{HV} P. Hintz and A. Vasy, \emph{The global non-linear stability of the Kerr-de Sitter family of black holes}, Acta Math. 220 (2018), 1–206.

\bibitem{H1}  L. H\"ormander, \emph{The lifespan of classical solutions of nonlinear hyperbolic
equations}, Pseudodifferential operators (Oberwolfach, 1986), 214--280, Lecture Notes in Math., 1256, Springer, Berlin, 1987.

\bibitem{H2} L. H\"ormander, \emph{On the fully nonlinear Cauchy problem with small initial data II}, Microlocal analysis and nonlinear waves (Minneapolis, MN, 1988--1989), 51--81, IMA Vol. Math. Appl., 30, Springer, New York, 1991.

\bibitem{H3} L. H\"ormander, \emph{Lectures on nonlinear hyperbolic differential equations}, Springer-Verlag, Berlin, 1997.

\bibitem{Hu1} C. Huneau, \emph{Stability of Minkowski Space-Time with a Translation Space-Like Killing Field}, Ann. PDE (2018) 4:12.

\bibitem{Hu2} C. Huneau, \emph{Stability in Exponential Time of Minkowski Space–Time with a Translation Space-Like Killing Field}, Ann. PDE (2016) 2:7.

\bibitem{John} F. John, \emph{Blow-up for quasilinear wave equations in three space dimensions}, Comm. Pure Appl. Math., 34(1):29--51, 1981.

\bibitem{Kl1} 
S. Klainerman, \emph{Uniform decay estimates and the Lorentz invariance of the classical wave equation}, Comm. Pure Appl. Math., 38(3):321–332, 1985.

\bibitem{KJ1} S. Klainerman and J. Szeftel, \emph{Global nonlinear stability of Schwarzschild spacetime under polarized perturbations}, Annals of Math Studies, 210. Princeton University Press, Princeton, NJ, 2020.

\bibitem{Lind} H. Lindblad, \emph{On the asymptotic behavior of solutions to Einstein’s vacuum equations in wave coordinates}. Comm. Math. Phys. \textbf{353}, (2017), No 1, 135--184.

\bibitem{LR1}
H. Lindblad and I. Rodnianski, \emph{The weak null condition for Einstein's equations}, C. R. Math. Acad. Sci. Paris 336 (2003), no. 11, 901--906.


\bibitem{LR2} H. Lindblad, I. Rodnianski, \emph{Global existence for the Einstein vacuum equations in wave coordinates}, Commun. Math. Phys. 256:43-110, 2005.


\bibitem{LR10} H. Lindblad, I. Rodnianski, \emph{The global stability of Minkowski space-time in harmonic gauge}. Ann. of Math. (2) 171 (2010), no. 3,
1401-1477. 

\bibitem{LT} H. Lindblad, M. Taylor, \emph{Global Stability of Minkowski Space for the Einstein–Vlasov System in the Harmonic Gauge}, Arch. Ration. Mech. Anal. 235 (2020) 517--633.

\bibitem{Loiz1} J. Loizelet, \emph{Solutions globales des équations d’Einstein-Maxwell}, Ann. Fac. Sci. Toulouse Math. (6), 18(3):565–610, 2009.

\bibitem{Loiz2} J. Loizelet, \emph{Problèmes globaux en relativité générale}, Ph.D. thesis, Université Francois Rabelais - Tours, 2008.

\bibitem{Ma} S. Ma, \emph{Almost Price's law in Schwarzschild and decay estimates in Kerr for Maxwell field}, Journal of Differential Equations, Volume 339, 5 December 2022, pages 1-89.

\bibitem{MTT} J. Metcalfe, D. Tataru, M. Tohaneanu, \emph{Pointwise decay for the Maxwell field on black hole space-times}, Advances in Mathematics, Volume 316, Pages 53--93 (2017).

 \bibitem{MY1} P. Mondal and S-T. Yau, \emph{Radiation estimates of the Minkowski space: coupled Einstein-Yang-Mills perturbations}, arXiv:2211.03167.
 
  \bibitem{MY3} P. Mondal and S-T. Yau, \emph{Einstein-Yang-Mills equations in the double null framework}, arXiv:2205.01101.
  
\bibitem{Mor1} C. S. Morawetz, \emph{The decay of solutions of the exterior initial-boundary value problem for the wave equation}, Comm. Pure Appl. Math., 14:561–568, 1961.

\bibitem{Pas1} F. Pasqualotto, \emph{Nonlinear stability for the Maxwell-Born-Infeld system on a Schwarzschild background}, Ann. PDE, 5(2):Paper No. 19, 172, 2019.

\bibitem{Pas2} F. Pasqualotto, \emph{The spin $\pm$ 1 Teukolsky equations and the Maxwell system on Schwarzschild}, Ann. Henri Poincaré, 20(4):1263–1323, 2019.

\bibitem{Ring1} H. Ringstr\"om, \emph{Future stability of the Einstein-non-linear scalar field system}, Invent. Math., 173(1):123–208, 2008.

\bibitem{T1} A. Tesfahun, \emph{Finite energy local well-posedness for the Yang-Mills-Higgs equations in Lorenz gauge}, Int. Math. Res. Notices (2015).

\bibitem{T2} A. Tesfahun, \emph{Local well-posedness of the Yang-Mills equations in Lorenz gauge below the energy norm}, Nonlinear Differ. Equ. Appl. 22 (2015), 849--875.

\bibitem{W2} E. Witten, \emph{Instability of the Kaluza-Klein Vacuum}, Nucl. Phys., B195:481– 492, 1982.

\bibitem{Wyatt1} Z. Wyatt, \emph{The Weak Null Condition and Kaluza-Klein Spacetimes}, Journal of Hyperbolic Differential Equations, Vol. 15, No. 02, pages 219-258 (2018).

\bibitem{Wyatt2} Z. Wyatt, \emph{The Stability of Hyperbolic PDEs in String Theory, Particle Physics and Cosmology}, Ph.D. thesis, University of Edinburgh, September 2020.

 \bibitem{Z} N. Zipser, \emph{The Global Nonlinear Stability of the Trivial Solution of the Einstein-Maxwell Equations}, Ph.D. thesis, Harvard University, 2000.

\end{thebibliography}
\end{document}